\newtheorem{fact}{Fact}
\newtheorem{theorem}{Theorem}
\journal{Computer Methods in Applied Mechanics and Engineering }
\begin{document}

\begin{frontmatter}

\title{Model order reduction of thermo-mechanical models with parametric convective boundary conditions: focus on machine tools}

%% or include affiliations in footnotes:
\author[mymainaddress,mysecondaryaddress]{Pablo Hern\'andez-Becerro\corref{mycorrespondingauthor}}
\cortext[mycorrespondingauthor]{Corresponding author}
\ead{hernandez@inspire.ethz.ch}
\author[mymainaddress]{Daniel Spescha}

\author[mysecondaryaddress]{Konrad Wegener}

\address[mymainaddress]{inspire AG, Technoparkstrasse 1, 8005 Zurich, Switzerland}
\address[mysecondaryaddress]{Institute of Machine Tools and Manufacturing (IWF), ETH Zurich, Leonhardstrasse 21, 8092 Zurich,  Switzerland}

\begin{abstract}

This paper presents a parametric Model Order Reduction (MOR) method for weakly coupled thermo-mechanical Finite Element (FE) models of machine tools and other similar mechatronic systems. This work proposes a reduction method, Krylov Modal Subspace (KMS), and a theoretical bound of the reduction error. The developed method addresses the parametric dependency of the convective boundary conditions using the concept of system bilinearization. Additionally, this paper investigates the coupling between the reduced-order thermal system and the mechanical response. A numerical example shows that the reduced-order model captures the response of the original system in the frequency range of interest.

\end{abstract}

\begin{keyword}
Parametric Model Order Reduction\sep thermo-mechanical model\sep machine tools
\end{keyword}

\end{frontmatter}

%\linenumbers

\section{Introduction}

Machine tools, such as milling machines, grinding machines, or lathes, are complex mechatronic systems and key components in the manufacturing process. Increasing the precision of machine tools enables to manufacture more accurate parts required in many engineering applications. Among the different error sources limiting the precision of machine tools, thermally induced deviations are the main contributor to geometric errors in manufactured parts, as stated by Mayr et al.\ \cite{2012_Mayr_thermal}. During the manufacturing process, the temperature distribution of machine tools is inhomogeneous and varies over time, leading to deformations of the structural parts. Internal and external sources are responsible for the time-varying temperature distribution, as summarized by Bryan \cite{Bry1990}. Internal sources refer to heat losses at the machine elements during the manufacturing process, such as friction at the bearings. External sources refer to the surroundings of the machine tool, such as fluctuations of the environmental temperature of the workshop.

Virtual prototypes are a great asset to improve the thermo-mechanical behavior of machine tools, as explained by Altintas et al.\ \cite{ALTINTAS2005115}. They enable to test different design alternatives virtually before a physical prototype is available. Virtual prototypes are numerical models that describe the physical processes leading to errors in manufactured parts. Thermo-mechanical models of machine tools are based on the finite element (FE) discretization of the heat transfer and elasticity equations. However, discretized thermo-mechanical models of machine tools have a large scale due their geometrical complexity, requiring a high computational effort. Many engineering applications use surrogate models, a computationally efficient model reproducing the physical behavior of the high-fidelity model. Among the different techniques to create surrogate models, this work focuses on projection based model order reduction (MOR). MOR projects the original model into a lower dimensional subspace containing the most relevant information of the response of the system. As stated by Benner et al.\ \cite{Benner2015}, the main benefit of MOR is that it retains the systems structure enabling the traceability of the dynamical evolution. Additionally, the use of the underlying system structure facilitates the derivation of theoretical error bounds of the surrogate model. In comparison to other surrogate modeling techniques, MOR is an intrusive method, i.e.\ it needs to access and modify the system matrices. 

Several works apply MOR to thermo-mechanical models of machine tools and other mechatronic systems. Herzog et al.\ \cite{Herzog2018} presented a thermo-mechanical model of a machine tool column. They concentrated on the problem of optimal sensor placement, i.e.\ selecting the optimal points to place temperature sensors in order to predict the thermally induced deviations. Finding the optimal location of the temperature sensors is an iterative process with a large amount of model evaluations. Thus, the authors proposed a reduced thermo-mechanical model based on proper orthogonal decomposition (POD). The thermo-mechanical model described the thermal response of a machine tool column to localized heat sources, representing the losses of a drive and friction of the ballscrew nut. Herzog et al.\ considered that the heat loads had a constant position over time, resulting in a time-invariant optimal sensor placement algorithm. However, relative movements between the machine tool axes lead to moving thermal contacts and heat loads. Therefore, efficient thermo-mechanical models of machine tools need to consider the position dependency of the thermal and mechanical response.

Lang et al.\ \cite{Lang2014} compared two methods to trace the position dependency of the thermal behavior of reduced thermal models of machine tools. In their first approach, Lang et al.\ divided the trajectory of the path into several segments and calculated for each of them a reduction basis by means of balanced truncation (BT). During the movement of the machine tool, the model switches to the local reduced system that is the closest to the actual position. The second reduction approach focused on a global reduction method, i.e.\ one projection matrix valid for all the positions of the axes. The proposed reduction method is a parametric iterative rational Krylov algorithm (IRKA), which for each parameter sample selects the optimal expansion points and tangential directions in a $\mathcal{H}_2$ optimal sense. Both reduction approaches showed similar accuracy compared to the full model, while the time required for reduction of the parametric IRKA was considerably smaller than for the local basis with BT. However, the amplitude of the thermal response of machine tools decays at higher excitation frequencies. Therefore, MOR approaches capturing the thermal response of the system in frequency range of interest are more efficient. Lang et al.\ considered the position dependency of the temperature response. However, the stiffness of the machine tool varies at different positions of the axes. Thus, it is required to couple efficiently the temperature distribution to a mechanical model that can reproduce the position-dependent mechanical response.    

Convective boundary conditions describe the interaction between the structural parts of the machine tool with the surrounding fluid media. Convective heat exchange is the main driving mechanism of the thermal response of the machine tools. It describes the heat exchange due to fluctuations of the environmental temperature \cite{Mian2013, Weng2018} or introduction of cooling fluid into the working space \cite{Shi2018}. The heat transfer coefficient (HTC) is the proportionality constant between the convective heat flux and the temperature difference between the fluid and structure. The modification of the conditions of the external fluid flow results in a variation of the HTC. Therefore, efficient thermo-mechanical models need to enable the modification of the HTC. In the context of projection-based MOR, the HTC is a model parameter that needs to be traced after reduction, i.e.\ parametric MOR techniques are required.

Benner et al.\ \cite{Benner2015} provided a comprehensive review of parametric MOR methods. The parametric reduction approaches are classified into two groups, namely local or global reduction bases. A local reduction basis is constructed evaluating the system response at several samples of the parameter space. In order to switch the value of the parameter, interpolating between reduced systems is required. There are several alternatives in the literature to perform this interpolation, such as interpolating between the locally reduced system matrices \cite{Panzer2010, Lee2017}, between the local subspaces \cite{Amsallem2008}, or between the transfer functions \cite{BaurBenner2011}. Parametric reduction with local bases provides efficient models valid for several discrete samples of the parameters. However, the convective boundary conditions of thermal models of machine tools vary continuously. Therefore, parametric MOR approaches with global reduction bases are more suitable for the models under investigation. These methods create a single set of projection bases for the values of the parameter space. 

Several reduction approaches with global bases rely on bilinearization of the systems equations. These methods express the parametric system in a bilinear form, which is a special type of non-linear systems. Phillips \cite{Phillips2003} used functional series expansion for the reduction of bilinear systems with rational interpolation. Continuing the work of Phillips, Bai and Skoogh \cite{BAI2006406} generalized the construction of the reduction basis matching a desired number of moments of the bilinear system. Breiten and Damm \cite{BREITEN2010443} proposed a reduction method for bilinear systems with Krylov subspace methods including expansion points at different values than zero. Benner and Breiten \cite{Benner2012} presented a reduction approach for bilinear systems based on IRKA, named BIRKA. The authors proved that BIRKA is optimal in $\mathcal{H}_2$ for parameter-varying systems. Bruns and Benner \cite{Bruns2015} applied BIRKA to create a heat transfer model of an electrical motor. They investigated the parameters concerning the thermal contacts and convective heat transfer.

This paper proposes a parametric MOR reduction approach for thermo-mechanical models of mechatronics systems, such as machine tools. The presented MOR method considers the characteristic behavior of these models in order to build the reduction basis, enabling the possibility to trace the parameters describing the convective boundary conditions. The main contributions of this work can be summarized as:

\begin{itemize}
	\item Developing a novel MOR approach which efficiently represents the thermal behavior of the system in the frequency range of interest. Associated to the MOR method, an a-priori error estimator is proposed in order to have an upper bound of the error in the frequency range of interest.
	\item Enabling the traceability of the parameters describing the convective boundary conditions after reduction.
	\item Coupling efficiently the thermal response to the mechanical structural deformations. For any temperature distribution, the reduced model needs to evaluate the thermally induced structural deformations. 

\end{itemize}

\section{Formulation of the parametric thermo-mechanical model}

Thermo-mechanical models of machine tools describe the temperature distribution and the associated structural deformations. The temperature distribution is a continuous function $T(t, \bm{z})$ defined at every point $\bm{z}$ of the domain $\Omega$ over time. Based on the energy conservation principle, the heat transfer equation is a partial differential equation (PDE) describing the temporal and spatial evolution of the temperature field as
\begin{equation}
	\rho c_p \dot{T}(t, \bm{z}) - \text{div}(\lambda T(t, \bm{z})) = 0 
	\label{eq:PDE}
\end{equation}
where $c_p$ is the specific heat capacity, $\rho$ is the material density, and $\lambda$ is the thermal conductivity. The definition of the boundary and initial conditions ensure a unique solution of the PDE. The Neumann boundary conditions can be interpreted as a heat flux applied to a surface $\Gamma_1$. Neumann boundary conditions are defined as
\begin{equation}
	\lambda \frac{\partial T(t, \bm{z})}{\partial n} = \dot{q}(t, \bm{z})
	\label{eq:Neummann}
\end{equation}
where $\dot{q}(t, \bm{z})$ is the heat flux applied on the boundary $\Gamma_1$. Convection is represented by Robin boundary conditions. The Robin boundary conditions are defined as 
\begin{equation}
	\lambda \frac{\partial T(t, \bm{z})}{\partial n} - h(t, \bm{z}) (T(t, \bm{z}) - T_{ext}(t, \bm{z})) = 0
	\label{eq:Robin}
\end{equation}
where $h(t, \bm{z})$ is the heat transfer coefficient and $T_{ext}(t, \bm{z})$ is the external temperature acting on $\Gamma_2$. The Robin boundary condition represents the convective heat exchange between an external fluid media and the structure or the thermal contact between two parts. Finally, the initial condition is 
\begin{equation}
	T(\bm{z}, 0) = T_0(\bm{z})
	\label{eq:Initial}
\end{equation}
over the whole domain $\Omega$.

The heat transfer equations describe the spatial distribution and temporal evolution of the  temperature. However, the thermally induced mechanical deviations are the main output of interest. Therefore, a coupled thermo-mechanical model is required. The thermo-mechanical coupling implies that the temperature field affects the structural deformation due to non-zero thermal expansion coefficients. However, the mechanical work resulting from the deformation of the structure does not alter the temperature field. Thus, a weakly coupled mechanical system is needed. The assumption of weak coupling holds for thermo-mechanical models with small strain rates. 

The force balance equation relates in continuum mechanics the stress tensor with the external forces as  
\begin{equation}
-\text{div}(\bm{\sigma}(t, \bm{z})) = \bm{f}
\label{eq:PDE_mech}
\end{equation}
where $\bm{\sigma}$ is the stress tensor at a point $\bm{z}$ in the domain $\Omega$ and $\bm{f}$ is the external force vector. Small deformations and strains are assumed, and thus a linear elastic model is considered. The stress tensor can be decomposed into an elastic part $\bm{\sigma}_e$ and a thermal part $\bm{\sigma}_{th}$. Considering the constitutive equation, they can be expressed as 
\begin{equation}
\bm{\sigma}_e = \frac{E}{1+\nu}\bm{\epsilon} + \frac{E \nu}{(1+\nu)(1-2\nu)}\text{trace}(\bm{\epsilon})\bm{I}
\end{equation} 
\begin{equation}
\bm{\sigma}_{th} = - \frac{E}{(1-2\nu)}\alpha(T-T_{ref})\bm{I}
\end{equation} 
where $\bm{\epsilon}$ is the strain tensor, $E$ is the Young modulus, $\nu$ is the Poisson modulus, $\bm{I}$ is the identity tensor, $\alpha$ is the thermal expansion coefficient, and $T_{ref}$ is the reference temperature. Under the assumption of small deformations, the strain tensor can be linearized as the gradient of the displacements $\bm{u}_s$ as
\begin{equation}
\bm{\epsilon} = \frac{1}{2}(\nabla \bm{u}_s + \nabla \bm{u}_s ^T)
\end{equation} 

Numerical methods enable the solution of heat transfer and elasticity equations for general complex domains. The FE method is a well-established numerical method to solve the heat transfer and elasticity equations. Bathe \cite{Bathe2006} provides a comprehensive review of FE methods. Several commercial and open-source FE simulation packages are available, being extensively used in computer aided engineering (CAE). The FE-discretization interpolates a scalar field inside an element $e$ with the values at the nodes. The interpolation functions are called antsatz functions $\bm{n}_e(\bm{z})$, defined for any point $\bm{z}$ inside the domain of the element $\Omega^e$. Considering the interpolation of the temperature field with the ansatz function and the principle of virtual work, the PDE of Equation \eqref{eq:PDE} to \eqref{eq:Initial} is transformed into a system of ordinary differential equations (ODE) for the element $e$. Assembling for all the elements of the FE-mesh, the ODE for the whole system can be obtained as   
\begin{equation}
	\bm{C}_{th} \bm{\dot{x}}(t) + \bm{K}_{cond}(t) \bm{x}(t) + \bm{K}_{conv}(t) \bm{x}(t) = \bm{q}_{ext}
	\label{eq:FEM_thermal}
\end{equation}
where $\bm{C}_{th}$ is the thermal capacity matrix, $\bm{K}_{cond}$ is the thermal conductivity matrix,  $\bm{K}_{conv}$ is the thermal convection matrix, $\bm{q}_{ext}$ is the thermal heat input vector, and $\bm{x}$ is the temperature state vector. 

From the different physical parameters describing Equation \eqref{eq:FEM_thermal}, the value of the HTC are of special interest for thermo-mechanical models of machine tools. The FE-discretization of the Robin boundary condition affects both the convection matrix $\bm{K}_{conv}$ and the heat input vector $\bm{q}_{ext}$. Equation \eqref{eq:Robin} defines the HTC, $h(t, \bm{z})$, as the proportionality constant between the convective heat flux and the temperature difference between the structure and an external temperature. For the following derivations, it is assumed that the spatial distribution of the HTC does not change over time. Therefore, the HTC can be expressed separating the spatial and temporal dependency as $h(t, \bm{z}) = h(t)w(\bm{z})$. Considering this, the parametric dependency of the convection matrix and heat input vector of an element $e$ can be expressed as 
\begin{equation}
	\bm{K}^e_{conv}(t) = h(t)\int_{\Gamma^e_2}^{} \bm{n}_e w(\bm{z} ) \bm{n}_e^T d \bm{z}
	\label{eq:convection_FEM}
\end{equation}
\begin{equation}
	\bm{q}^e_{ext} = h(t)\int_{\Gamma^e_2}^{} \bm{n}_e w(\bm{z}) T_{ext}(t, \bm{z})  d\bm{z}
	\label{eq:heat_FEM}
\end{equation} 

Thermal FE models usually discretize the convective boundary condition into a finite number of boundaries with the same HTC, 
\begin{equation}
	\Gamma_{2} = \bigcup\limits_{i=1}^{n_c}\Gamma_i
	\label{eq:discrete_boundaries}
\end{equation} 
where $n_c$ is the number of independent boundaries. The HTCs for all individual boundaries, $h_i(t)$, can be arranged in a parameter vector $\bm{p} \in \mathbb{R}^{n_c \times 1}$. Therefore, the system of ODE of Equation \eqref{eq:FEM_thermal} can be expressed in state space representation as a function of $\bm{p}$ as
\begin{equation}
	\bm{E} \dot{\bm{x}}(t) = \bm{A}(\bm{p}) \bm{x}(t) + \bm{B}\bm{u}(t)
	\label{eq:LTI}
\end{equation}
where $\bm{E} \in \mathbb{R}^{n \times n}$ is the left-hand side system matrix or mass matrix, $\bm{A}(\bm{p}) \in \mathbb{R}^{n \times n}$  is the system matrix, $\bm{B}\in \mathbb{R}^{n \times m}$ is the input matrix, $\bm{x}(t) \in \mathbb{R}^{n \times 1}$ is the state vector, and $\bm{u}(t)\in \mathbb{R}^{m \times 1}$ is the input vector.  Equation \eqref{eq:LTI} considers that the input vector also includes the corresponding value of the HTC. Considering the assumptions leading to Equation \eqref{eq:convection_FEM}, the parametric dependency of the state space representation of the Equation \eqref{eq:LTI} can be expressed in an affine form as
\begin{equation}
	\bm{E} \dot{\bm{x}}(t) = \bm{A} \bm{x}(t) +  \sum_{i=1}^{n_c}h_i(t)\bm{D}_i \bm{x}(t) + \bm{B}\bm{u}(t)
	\label{eq:system_distributed}
\end{equation}
where $\bm{D}_i$ represents the convection matrix for each boundary $\Gamma_i$. The temperature output vector of the system $\bm{y}_{th}(t)$ is defined as 
\begin{equation}
	\bm{y}_{th}(t) = \bm{C}_{therm}\bm{x}(t)
	\label{eq:LTI_output}
\end{equation}
where $\bm{C}_{therm}\in \mathbb{R}^{p \times n}$ is the thermal output matrix. The FE discretization of Equation \eqref{eq:PDE_mech} leads to system of linear equations
\begin{equation}
	\bm{K} \bm{x}_{mech} = \bm{K}_{th}(\bm{x}(t) - \bm{x}_{ref}) + \bm{f}_{ext}
	\label{eq:mechanical_system} 
\end{equation}
where $\bm{K}$ is the stiffness matrix, $\bm{K}_{th}$ is the thermal coupling matrix, $\bm{x}_{ref}$ is the reference temperature vector, $\bm{f}_{ext}$ is the external force vector, and $\bm{x}_{mech}$ is the displacement state vector. On one hand, the thermal states $\bm{x}$ are an input of the mechanical system. On the other hand, external mechanical forces, such as preloads at the machine tool elements, affect the response of the mechanical system. Therefore, instead of defining the mechanical deviations as an output of the system of Equation \eqref{eq:LTI}, a separate state $\bm{x}_{mech}$ is defined. Defining a separate state and system matrix has another practical advantage during model development. It enables the creation and validation of a mechanical model of the structure and then extend it in order to consider thermo-mechanical effects. Therefore, having an dedicated mechanical model is beneficial from a practical point of view. 

The state space representation of the mechanical system is 
\begin{equation}
	\bm{K}\bm{x}_{mech}(t) = \begin{bmatrix}
	\bm{K}_{th} & \bm{B}_{ext}
	\end{bmatrix}\begin{bmatrix}
	\bm{x}(t) \\ \bm{u}_{ext}(t)
	\end{bmatrix}
	\label{eq:coupling_state}
\end{equation} 
where $\bm{K}$ is the mechanical system matrix or stiffness matrix, $\bm{B}_{ext}$ is the input matrix of the external mechanical forces, and $\bm{u}_{ext}(t)$ is the input vector of the external mechanical forces.

In order to complete the state space representation, an output vector is defined as $\bm{y}_{mech}(t)$ 
\begin{equation}
	\bm{y}_{mech} (t) = \bm{C}_{mech} \bm{x}_{mech}
	\label{eq:LTI_output_mech}
\end{equation}
where $\bm{C}_{mech}$ is the output matrix. The output of the mechanical model are the displacements at several points of interest, such as the relative deviations between tool and workpiece. 

\section{Formulation of the reduced-order model}

The FE-discretization of thermo-mechanical models of machine tools leads to the system equations describing the thermal transient behavior. In order to increase the computational efficiency of the models, this work proposes to create surrogate models by projection-based MOR. MOR methods construct a subspace containing the most relevant information of the high fidelity model. 

Let  $\mathcal{V}$ be the reduced subspace of dimension $r \ll n$, where $n$ is the dimension of the original system. Let $\bm{V} \in \mathbb{R}^{n \times r}$, such that $\mathcal{V} = \text{span} (\bm{V})$. The reduced state of the system $\bm{\tilde{x}}(t)$ is defined as
\begin{equation}
	\bm{x}(t) \approx \bm{V} \bm{\tilde{x}}(t)
	\label{eq:reduced_state}
\end{equation}

The different MOR approaches propose different methods to construct the reduction basis $\bm{V}$. Once the reduction subspace is defined, the thermal system is projected into the reduction subspace substituting Equation \eqref{eq:reduced_state} into Equation \eqref{eq:system_distributed} and \eqref{eq:LTI_output}. The system can be expressed as
\begin{equation}
	\bm{E} \bm{V} \bm{\dot{\tilde{x}}}(t) = \bm{A} \bm{V} \bm{\tilde{x}}(t) +  \sum_{i=1}^{n_c}h_i(t)\bm{D}_i \bm{V} \bm{\tilde{x}}(t) + \bm{B}\bm{u}(t)
\end{equation}
\begin{equation*}
	\bm{\tilde{y}}(t) = \underbrace{\bm{C}_{therm}\bm{V}}_{\bm{\tilde{C}}_{therm}}\bm{\tilde{x}}(t)
\end{equation*}

By enforcing the Petrov-Galerkin condition, a basis $\bm{W}$ can be found such that
\begin{equation}
\underbrace{\bm{W}^T\bm{E} \bm{V}}_{\bm{\tilde{E}}}\bm{\dot{\tilde{x}}}(t) = 
\underbrace{\bm{W}^T\bm{A} \bm{V}}_{\bm{\tilde{A}}}\bm{\tilde{x}}(t) +
\sum_{i=1}^{n_c}h_i(t) \underbrace{\bm{W}^T\bm{D}_i \bm{V}}_{\bm{\bm{\tilde{D}}_i}}\bm{\tilde{x}}(t) +
\underbrace{\bm{W}^T\bm{B} }_{\bm{\tilde{B}}}\bm{u}(t)
\end{equation}
leading to the projected system matrices $\bm{\tilde{E}}$, $\bm{\tilde{A}}$, $\bm{\tilde{D}}_i$,  $\bm{\tilde{B}}$, and $\bm{\tilde{C}}_{therm}$ and the following reduced system
\begin{equation}
	\bm{\tilde{E}} \bm{\dot{\tilde{x}}}(t)  = \bm{\tilde{A}} \bm{\tilde{x}}(t) + \sum_{i=1}^{n_c}h_i(t)\bm{\tilde{D}}_i \bm{\tilde{x}}(t) + \bm{\tilde{B}} \bm{u}(t) 
\end{equation}
\begin{equation*}
	\bm{\tilde{y}}(t) = \bm{\tilde{C}}_{therm}\bm{\tilde{x}}(t)
\end{equation*}
Similar considerations can be made for the mechanical system of Equation \eqref{eq:coupling_state} and \eqref{eq:LTI_output_mech}.

This work considers one-sided projection, i.e.\ $\bm{W} = \bm{V}$. As explained by Antoulas \cite{Antoulas2005}, the main advantage of one-sided projection is that it ensures the preservation of the stability after projection.    

\section{Krylov Modal Subspace (KMS) reduction}

The goal of this work is to create a reduced system that reproduces the thermal response for any value of the parameters describing the convective heat exchange between the structure and the surrounding fluid. Equation \eqref{eq:system_distributed} presents an affine representation of the parametric dependency, separating the system matrix into two terms. The first step considers the thermal part of the system that does not depend on the parameter. A reduction basis is created such that the reduced system reproduces the response of the system where the values of the parameters are set to a fixed value. This section introduces the MOR approach and an error bound for the error between the original and the reduced-order model. In a second step, the projection basis is extended in order to enable the modification of the HTC after reduction, which is the focus of the next section. 

The projection basis needs to capture the most relevant part of the response of the system. The thermal models of mechatronic systems, such as machine tools, have a certain characteristic behavior compared to other general first order systems. Firstly, the amplitude of the thermal response of the system decays at higher excitation frequencies. Therefore, the reduced system only needs to approximate the thermal response of the system in a low frequency range. Secondly, the steady state response is relevant in order to describe the behavior of the system. Thus, the reduced system needs to match the steady state temperature field of the original system for the different input and output combinations. 

This work proposes a reduction method that considers the characteristic behavior of thermal models of machine tools to construct the projection basis. On one hand, moment matching Krylov subspace methods approximate the response of the system around an expansion point $s_e$. The expansion point is placed at a low frequency close to zero in order to match the steady state response of the system. On the other hand, the eigenvectors of the system up to a certain frequency are included in the projection basis. Therefore, the thermal reduced system reproduces the thermal behavior of the system in the frequency range of interest. Spescha \cite{Spescha_Diss} introduced the concept of KMS reduction with application in structural dynamics. This paper extends the KMS method to first order systems and presents a theoretical bound of the reduction error.

Let $\mathcal{V}_k \subset \mathbb{R}^{n} $ be the Krylov subspace with one expansion point $s_e$, such that 

\begin{equation}
	\mathcal{V}_k = \text{span}((s_e\bm{E-\bm{A}})^{-1}\bm{B})
	\label{eq:subspace_krylov}
\end{equation}

An orthonormal basis $\bm{V}_k$ of $\mathcal{V}_k$  can be constructed, such that $\mathcal{V}_k = \text{span} (\bm{V}_k)$. The original system of Equation \eqref{eq:LTI} can be projected into the subspace $\mathcal{V}_k$. The reduced system matches the response of the system around the expansion point $s_e$. If an expansion point $s_e$ is chosen close to 0, the reduced system matches the steady state response. 

Let $\mathcal{V}_{\mu} \subset \mathbb{R}^{n} $ be the truncated modal subspace. The subspace $\mathcal{V}_{\mu}$ is the span of the first $\mu$ eigenvectors of the system of Equation \eqref{eq:LTI}, defined as 
\begin{equation}
	\mathcal{V}_{\mu} = \text{span}(\begin{bmatrix}
	\bm{\phi}_1  & \bm{\phi}_2& \dots& \bm{\phi}_{\mu}
	\end{bmatrix})
	\label{eq:subspace_modal}
\end{equation}
where $\bm{\phi}_i$ is an eigenvector associated to the eigenvalue $\alpha_i$ such that $(s_e\bm{E} - \bm{A})\bm{\phi}_i = \alpha_i \bm{\phi}_i$. The system matrices $\bm{A}$ and $\bm{E}$, defined in Equation \eqref{eq:FEM_thermal}, \eqref{eq:convection_FEM}, \eqref{eq:heat_FEM}, and \eqref{eq:LTI}, are negative semi-definite, resulting in all real non-positive eigenvalues. Therefore, the eigenvectors form an orthonormal basis $\bm{V}_{\mu}$ of $\mathcal{V}_{\mu}$, such that $\mathcal{V}_{\mu} = \text{span} (\bm{V}_{\mu})$. The original system of Equation \eqref{eq:LTI} can be projected into the subspace $\mathcal{V}_{\mu}$. The reduced system approximates the dynamic thermal behavior in a certain frequency range. 

The KMS reduction projects the system of Equation \eqref{eq:LTI} by means of a projection matrix $\bm{V}$. The basis $\bm{V}$ spans linear subspace $\mathcal{V}  \subset \mathbb{R}^{n} $, i.e.\ $\mathcal{V} = \text{span} (\bm{V})$, such that

\begin{equation}
	\mathcal{V} = \mathcal{V}_{\mu} + \mathcal{V}_k = \{\bm{x} \in  \mathbb{R}^{n} \; / \; \exists \bm{v}_1 \in \mathcal{V}_{\mu} \; \bm{v}_2 \in \mathcal{V}_{k} \; \bm{x} = \bm{v}_1 +\bm{v}_2 \}
	\label{eq:subspace_KMS}
\end{equation}

The subspace $\mathcal{V}$ captures the information about both the steady state response and the thermal transient behavior of the system. Therefore, this projection basis satisfies the requirement for accurate approximation of the thermal behavior of mechatronic systems.

\subsection{A-priori error estimator}

The reduced system needs to represent accurately the thermal response of the system in the frequency range of interest, i.e.\ for all $\omega \in [0 , \omega_{max}]$. The upper bound of the frequency range of interest, $\omega_{max}$, determines how many eigenvectors, $\mu$, are to be included in the KMS reduction basis. Therefore, this work proposes an a-priori error estimator for the KMS method that relates the maximum eigenfrequency, $\omega_{\mu} = \left\lvert  \alpha_{\mu}\right\rvert$, of the KMS basis with the frequency range of interest. 

The error estimator presented in this section considers that the input and output matrix of Equation \eqref{eq:LTI} and \eqref{eq:LTI_output} are the same, i.e.\ $\bm{B} = \bm{C}^T_{therm}$, leading to one-sided projection. For the derivation of the error estimator of the KMS method, the Krylov subspace defined in Equation \eqref{eq:subspace_krylov} has an expansion point $s_e \in \mathbb{R}$ close to zero and a single iteration. Before introducing the error estimator, some definitions and preliminary results are introduced. 

Firstly, a suitable error definition is required. Different error values are proposed in the literature, as summarized by Benner et al.\ \cite{Benner2015}. Let $\bm{E}(j\omega)$ be the absolute reduction error frequency response function (FRF) defined for each frequency $\omega$ as
\begin{equation}
\bm{E}(j\omega) = \bm{H}(j\omega) - \bm{\tilde{H}}(j\omega)\
\end{equation}  
where $\bm{H}(j\omega)$ and $\bm{\tilde{H}}(j\omega)$ are the FRF of the original and reduced system respectively. $\bm{E}(j\omega)$ is a matrix of dimension $p$ (number of outputs) by $m$ (number of inputs). Let $e_{ij}(j\omega)$ be the relative error for the $i$th input and $j$th output combination as 
\begin{equation}
e_{ij}(j\omega) = \frac{ h_{ij}(j\omega) - \tilde{h}_{ij}(j\omega)  }{ h_{ij}(j\omega) }
\label{eq:error_definition_ij}
\end{equation}
where $h_{ij}(j\omega)$ is the element in $i$th row and $j$th column of $\bm{H}(j\omega)$, and $\tilde{h}_{ij}(j\omega)$ is the element in $i$th row and $j$th column of $\bm{\tilde{H}}(j\omega)$. 

Secondly, some remarks about the subspaces associated to the KMS $\mathcal{V}$ are required. The subspace $\mathcal{V}_{\nu}\subset \mathbb{R}^{n}$ can be defined as the subspace of the remaining modes not included in $\mathcal{V}_{\mu}$, i.e.\
\begin{equation}
\mathcal{V}_{\nu} = \text{span}(\begin{bmatrix}
\bm{\phi}_{\mu+1}  & \bm{\phi}_{\mu+2}& \dots& \bm{\phi}_{n}
\end{bmatrix})
\label{eq:subspace_remaining}
\end{equation}

Due to the properties of the system matrix, the subspace $\mathcal{V}_{\mu}$ is the orthogonal complement of $\mathcal{V}_{\nu}$, such that $\mathbb{R}^{n} = \mathcal{V}_{\mu} \oplus \mathcal{V}_{\nu}$. Let $\bm{\Phi}$ be a matrix whose columns are the eigenvectors $\bm{\phi}_i$ of the system of Equation \eqref{eq:LTI}, normalized to the capacity matrix such that $\bm{\Phi}^T\bm{E} \bm{\Phi} = \bm{I}$. The system of Equation \eqref{eq:LTI} and \eqref{eq:LTI_output} can be expressed in modal coordinates $ \bm{x} = \bm{\Phi}\bm{x}_m$ as
\begin{equation}
\bm{I} \dot{\bm{x}}_m(t) = \bm{\Omega} \bm{x}_m(t) + \bm{\Phi}^T\bm{B}\bm{u}(t) =  \bm{\Omega} \bm{x}_m(t) + \bm{B}_m\bm{u}(t)
\label{eq:LTI_modal}
\end{equation}
\begin{equation}
\bm{y}_{th}(t) = \bm{C}_{therm}\bm{\Phi}\bm{x}_m(t) = \bm{C}_m\bm{x}_m(t)
\label{eq:LTI_modal_output}
\end{equation}
where $\bm{\Omega} = \text{diag}(\alpha_1, \dots, \alpha_n)$ is a diagonal matrix with the $n$ eigenvalues $\alpha_k$ of the system. The system matrix can be expressed as block matrix, such that
\begin{equation}
\begin{bmatrix}
\bm{I}_{\mu} & 0  \\
0           & \bm{I}_{\nu}
\end{bmatrix}\begin{bmatrix}
\dot{\bm{x}}_{\mu} \\
\dot{\bm{x}}_{\nu}
\end{bmatrix} = \begin{bmatrix}
\bm{\Omega}_{\mu} & 0  \\
0           & \bm{\Omega}_{\nu}
\end{bmatrix}\begin{bmatrix}
\bm{x}_{\mu} \\
\bm{x}_{\nu}
\end{bmatrix}+\begin{bmatrix}
\bm{B}_{\mu} \\
\bm{B}_{\nu}
\end{bmatrix}\bm{u}(t)
\label{eq:LTI_block_modal}
\end{equation}
\begin{equation}
\bm{y}_{th} = \begin{bmatrix}
\bm{C}_{\mu} &&
\bm{C}_{\nu}
\end{bmatrix}\begin{bmatrix}
\bm{x}_{\mu} \\
\bm{x}_{\nu}
\end{bmatrix}
\label{eq:LTI_block_modal_output}
\end{equation}

Let $\mathcal{V}_{\nu k} \subset \mathbb{R}^{n} $ be the Krylov subspace with one moment around the expansion point $s_e \in \mathbb{R}$ of the original system projected into the subspace $\mathcal{V}_{\nu}$, such that 

\begin{equation}
\mathcal{V}_{\nu k} = \text{span}((s_e\bm{I_{\nu}-\bm{\Omega_{\nu}}})^{-1}\bm{B_{\nu}})
\label{eq:subspace_rkrylov}
\end{equation}

The definition of $\mathcal{V}_{\nu k}$ leads to a preliminary result expressed in Fact \ref{fact:r_modal}. 

\begin{fact}
	
	The KMS $\mathcal{V}$ is the direct sum of $\mathcal{V}_{\mu}$ and $\mathcal{V}_{\nu k}$.	
	\begin{equation}
	\mathcal{V} = \mathcal{V}_{\mu} \oplus  \mathcal{V}_{\nu k}
	\end{equation}
	\label{fact:r_modal}
\end{fact}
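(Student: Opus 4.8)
The plan is to carry out the whole argument in the modal coordinates of Equation~\eqref{eq:LTI_modal}, in which the resolvent $(s_e\bm{E}-\bm{A})^{-1}$ becomes diagonal and hence respects the splitting $\mathbb{R}^{n}=\mathcal{V}_{\mu}\oplus\mathcal{V}_{\nu}$ established before Equation~\eqref{eq:LTI_modal}. The statement really has two parts: first that the sum $\mathcal{V}=\mathcal{V}_{\mu}+\mathcal{V}_{k}$ coincides with $\mathcal{V}_{\mu}+\mathcal{V}_{\nu k}$, and second that this latter sum is direct. I would obtain the first, substantive, part by decomposing the single Krylov direction along the two modal blocks and absorbing its $\mu$-component into $\mathcal{V}_{\mu}$.

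To set up, partition $\bm{\Phi}=\begin{bmatrix}\bm{\Phi}_{\mu}&\bm{\Phi}_{\nu}\end{bmatrix}$ conformally with the block structure of Equation~\eqref{eq:LTI_block_modal}, so that $\mathcal{V}_{\mu}=\text{span}(\bm{\Phi}_{\mu})$ and $\mathcal{V}_{\nu}=\text{span}(\bm{\Phi}_{\nu})$. Using the modal relations $\bm{\Phi}^{T}\bm{E}\bm{\Phi}=\bm{I}$ and $\bm{\Phi}^{T}\bm{A}\bm{\Phi}=\bm{\Omega}$, one writes $s_e\bm{E}-\bm{A}=\bm{\Phi}^{-T}(s_e\bm{I}-\bm{\Omega})\bm{\Phi}^{-1}$ and therefore
\begin{equation}
(s_e\bm{E}-\bm{A})^{-1}\bm{B}=\bm{\Phi}(s_e\bm{I}-\bm{\Omega})^{-1}\bm{B}_{m}=\bm{\Phi}_{\mu}(s_e\bm{I}_{\mu}-\bm{\Omega}_{\mu})^{-1}\bm{B}_{\mu}+\bm{\Phi}_{\nu}(s_e\bm{I}_{\nu}-\bm{\Omega}_{\nu})^{-1}\bm{B}_{\nu},
\end{equation}
the splitting being possible because $s_e\bm{I}-\bm{\Omega}$ is block diagonal. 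The first summand lies in $\mathcal{V}_{\mu}$ and the second in $\mathcal{V}_{\nu}$.

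From here the argument is short. Since the first summand is already contained in $\mathcal{V}_{\mu}$, it contributes nothing once $\mathcal{V}_{\mu}$ is adjoined, so $\mathcal{V}_{\mu}+\mathcal{V}_{k}=\mathcal{V}_{\mu}+\text{span}(\bm{\Phi}_{\nu}(s_e\bm{I}_{\nu}-\bm{\Omega}_{\nu})^{-1}\bm{B}_{\nu})$. The remaining span is exactly the realization in $\mathbb{R}^{n}$, via the embedding $\bm{\Phi}_{\nu}$, of the subspace $\mathcal{V}_{\nu k}$ of Equation~\eqref{eq:subspace_rkrylov}, which gives $\mathcal{V}_{\mu}+\mathcal{V}_{k}=\mathcal{V}_{\mu}+\mathcal{V}_{\nu k}$. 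For directness, I would note that $\mathcal{V}_{\nu k}\subseteq\mathcal{V}_{\nu}$ while $\mathcal{V}_{\mu}\cap\mathcal{V}_{\nu}=\{\bm{0}\}$, so $\mathcal{V}_{\mu}\cap\mathcal{V}_{\nu k}=\{\bm{0}\}$ and the sum is direct.

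The only genuine care required is the bookkeeping between the two coordinate systems: $\mathcal{V}_{\nu k}$ is written in Equation~\eqref{eq:subspace_rkrylov} in the $\nu$-block modal coordinates, whereas $\mathcal{V}_{\mu}$ and $\mathcal{V}_{k}$ live in $\mathbb{R}^{n}$, and identifying the two calls for the embedding $\bm{\Phi}_{\nu}$. The conceptual crux, that the Krylov direction splits cleanly along the modal blocks, rests entirely on the resolvent being diagonalized by $\bm{\Phi}$, and that is the step I would verify most carefully.
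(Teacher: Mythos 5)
Your proof is correct and follows essentially the same route as the paper: both pass to modal coordinates, use the diagonality of the resolvent there to split the Krylov direction into its $\mu$- and $\nu$-block components, absorb the $\mu$-component into $\mathcal{V}_{\mu}$, and obtain directness from $\mathcal{V}_{\nu k}\subset\mathcal{V}_{\nu}$ together with $\mathcal{V}_{\mu}\cap\mathcal{V}_{\nu}=\{\bm{0}\}$. Your version is in fact slightly more careful than the paper's, since you make the congruence $s_e\bm{E}-\bm{A}=\bm{\Phi}^{-T}(s_e\bm{I}-\bm{\Omega})\bm{\Phi}^{-1}$ and the embedding $\bm{\Phi}_{\nu}$ explicit rather than leaving the coordinate identification implicit.
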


\begin{proof}
	
	The Krylov subspace of the system in modal coordinates Equation \eqref{eq:LTI_modal} is 
	
	\begin{equation}
	\mathcal{V}_k = \text{span}((s_e\bm{I}-\bm{\Omega})^{-1}\bm{B}_m)
	\end{equation}
	
	As $s_e\bm{I}+\bm{\Omega}$ is a diagonal matrix, the Krylov subspace of the original system can be expressed as 
	
	\begin{equation}
	\mathcal{V}_k = \mathcal{V}_{\mu k} + \mathcal{V}_{\nu k} = \text{span}((s_e\bm{I}_{\mu}-\bm{\Omega}_{\mu})^{-1}\bm{B}_{\mu}) + \text{span}((s_e\bm{I}_{\nu}-\bm{\Omega}_{\nu})^{-1}\bm{B}_{\nu})
	\end{equation}
	
	The $\mathcal{V}_{\mu k}$ is a subspace of $\mathcal{V}_{\mu}$, i.e.\ $\mathcal{V}_{\mu k} \subset \mathcal{V}_{\mu}$. Therefore, $\mathcal{V} = \mathcal{V}_k +\mathcal{V}_{\mu} = \mathcal{V}_{\mu k} + \mathcal{V}_{\nu k} + \mathcal{V}_{\mu} = \mathcal{V}_{\nu k} + \mathcal{V}_{\mu} $. 
	
	In addition, $\mathcal{V}_{\nu k}$ is also a subspace of $\mathcal{V}_{\nu}$, i.e.\ $\mathcal{V}_{\nu k} \subset \mathcal{V}_{\nu}$. Since $\mathcal{V}_{\nu} \cap \mathcal{V}_{\mu} = \bm{0}$, it follows that $\mathcal{V}_{\nu k} \cap \mathcal{V}_{\mu} = \bm{0}$. This implies that $\mathcal{V}_{\nu k}$ is the orthogonal complement of $\mathcal{V}_{\mu}$, i.e.\ $\mathcal{V} =  \mathcal{V}_{\nu k} \oplus \mathcal{V}_{\mu}$

\end{proof}

Fact \ref{fact:r_modal} relates the subspaces $\mathcal{V}$, $\mathcal{V}_{\mu}$, and $\mathcal{V}_{\nu k}$, stating that any vector in $\mathcal{V}$ can be decomposed uniquely into two terms, one in $\mathcal{V}_{\mu}$ and another one in $\mathcal{V}_{\nu k}$. This property is useful to separate the error of the reduced system, as shown in Fact \ref{fact:separation_error}.   

\begin{fact}
	
	The error $e_{ij}(j \omega)$ of Equation \eqref{eq:error_definition_ij} can be separated as the product of two terms	
	\begin{equation}
	e_{ij}(j \omega) = -e_{\mu_{ij}}(j\omega )e_{\nu k_{ij}}(j\omega )
	\label{eq:error_separation}
	\end{equation}
	being $e_{\mu_{ij}}(j\omega )$ and $e_{\nu k_{ij}}(j\omega )$ defined as 
	\begin{equation}
	e_{\mu_{ij}}(j \omega) = \frac{h_{ij}(j\omega ) - \tilde{h}_{\mu_{ij}}(j\omega )}{h_{ij}(j \omega )}
	\label{eq:modal_error}
	\end{equation}
	\begin{equation}
	e_{\nu k_{ij}}(j \omega ) = \frac{\tilde{h}_{\nu k_{ij}}(j \omega ) - \tilde{h}_{\nu_{ij}}(j \omega )}{\tilde{h}_{\nu_{ij}}(j \omega )}
	\label{eq:error_reduced_krylov}
	\end{equation}
	where $\tilde{h}_{\mu_{ij}}(j\omega )$ is the element in $i$th row and $j$th column of the FRF of the system projected into $\mathcal{V}_{\mu}$, $\tilde{h}_{\nu_{ij}}(j\omega )$ is the element in $i$th row and $j$th column of the FRF of the system projected into $\mathcal{V}_{\nu}$, and $\tilde{h}_{\nu k_{ij}}(j\omega )$ is the element in $i$th row and $j$th column of the FRF of the system projected into $\mathcal{V}_{\nu k}$.
	
	\label{fact:separation_error}
\end{fact}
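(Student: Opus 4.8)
The plan is to reduce the statement to two additive decompositions of the frequency response and then combine them algebraically. First I would write the original transfer function in the modal coordinates of Equation~\eqref{eq:LTI_modal}. Because the system matrix is block diagonal in the $\mu$ and $\nu$ blocks, as in Equation~\eqref{eq:LTI_block_modal}, and the modes are $\bm{E}$-orthonormal, the resolvent $(s_e\bm{I}-\bm{\Omega})^{-1}$ is block diagonal, so the FRF splits exactly as $h_{ij}(j\omega) = \tilde{h}_{\mu_{ij}}(j\omega) + \tilde{h}_{\nu_{ij}}(j\omega)$. Here $\tilde{h}_{\mu_{ij}}$ is the contribution of the first block, which coincides with the FRF of the system projected onto $\mathcal{V}_{\mu}$ since projection onto a full modal block reproduces it exactly, and $\tilde{h}_{\nu_{ij}}$ is the contribution of the remaining block, i.e.\ the FRF projected onto $\mathcal{V}_{\nu}$.

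Second, I would establish the analogous decomposition for the reduced system. By Fact~\ref{fact:r_modal}, the KMS subspace is the direct sum $\mathcal{V} = \mathcal{V}_{\mu} \oplus \mathcal{V}_{\nu k}$, with $\mathcal{V}_{\mu}$ contained in the first modal block and $\mathcal{V}_{\nu k}$ in the second. Choosing the orthonormal basis $\bm{V} = \begin{bmatrix} \bm{V}_{\mu} & \bm{V}_{\nu k} \end{bmatrix}$ and invoking the $\bm{E}$-orthogonality of the two blocks, the one-sided projection $\bm{W}=\bm{V}$ renders $\bm{V}^T\bm{E}\bm{V}$ and $\bm{V}^T\bm{A}\bm{V}$ block diagonal and splits $\bm{V}^T\bm{B}$ and $\bm{C}_{therm}\bm{V}$ into their two block components. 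The reduced transfer function therefore separates as $\tilde{h}_{ij}(j\omega) = \tilde{h}_{\mu_{ij}}(j\omega) + \tilde{h}_{\nu k_{ij}}(j\omega)$, where the first term is again the exact $\mu$-block FRF and the second is the FRF of the $\nu$-block projected onto the Krylov subspace $\mathcal{V}_{\nu k}$ of Equation~\eqref{eq:subspace_rkrylov}.

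With both decompositions in hand, the final step is purely algebraic. Subtracting gives $h_{ij} - \tilde{h}_{ij} = \tilde{h}_{\nu_{ij}} - \tilde{h}_{\nu k_{ij}}$, since the common $\tilde{h}_{\mu_{ij}}$ terms cancel. Using $h_{ij} - \tilde{h}_{\mu_{ij}} = \tilde{h}_{\nu_{ij}}$ in Equation~\eqref{eq:modal_error} yields $e_{\mu_{ij}} = \tilde{h}_{\nu_{ij}}/h_{ij}$, and multiplying this by $e_{\nu k_{ij}}$ from Equation~\eqref{eq:error_reduced_krylov} collapses the $\tilde{h}_{\nu_{ij}}$ factors to recover $e_{ij} = (\tilde{h}_{\nu_{ij}} - \tilde{h}_{\nu k_{ij}})/h_{ij} = -e_{\mu_{ij}} e_{\nu k_{ij}}$, as claimed.

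I expect the main obstacle to be the second step: justifying rigorously that the one-sided Petrov--Galerkin projection onto $\mathcal{V} = \mathcal{V}_{\mu} \oplus \mathcal{V}_{\nu k}$ genuinely decouples into two independent block projections, so that the reduced FRF is the clean sum $\tilde{h}_{\mu_{ij}} + \tilde{h}_{\nu k_{ij}}$ with no cross-coupling. This relies on the $\bm{E}$-orthogonality of the modal blocks and on the fact that $\mathcal{V}_{\mu}$ and $\mathcal{V}_{\nu k}$ live entirely inside separate blocks; the assumption $\bm{B} = \bm{C}_{therm}^T$ keeps the projected input and output consistent. The two decompositions must be verified for the same basis $\bm{V}$ so that the shared $\tilde{h}_{\mu_{ij}}$ term cancels exactly.
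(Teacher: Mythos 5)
Your proposal is correct and follows essentially the same route as the paper's own proof: both rest on the two additive decompositions $h_{ij} = \tilde{h}_{\mu_{ij}} + \tilde{h}_{\nu_{ij}}$ and $\tilde{h}_{ij} = \tilde{h}_{\mu_{ij}} + \tilde{h}_{\nu k_{ij}}$ (the latter from Fact~\ref{fact:r_modal}), after which the product formula follows by the same cancellation and algebraic rearrangement. Your extra care in justifying the block decoupling of the projected system is simply a more explicit account of what the paper delegates to Fact~\ref{fact:r_modal}.
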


\begin{proof}
	The Fact \ref{fact:r_modal} enables to express the FRF of the reduced system as $\tilde{h}_{ij}(j \omega)  = \tilde{h}_{\mu_{ij}}(j \omega) + \tilde{h}_{\nu k_{ij}}(j \omega)$. Additionally, the transfer function of the original system can be expressed as $h_{ij}(j \omega)  = \tilde{h}_{\mu_{ij}}(j \omega) + \tilde{h}_{\nu_{ij}}(j \omega)$. Substituting in the error definition 
	\begin{equation}
	e_{ij}(j \omega) = \frac{\tilde{h}_{\mu_{ij}}(j \omega) + \tilde{h}_{\nu_{ij}}(j \omega) - \tilde{h}_{\mu_{ij}}(j \omega) - \tilde{h}_{\nu k_{ij}}(j \omega)}{h_{ij}(j \omega)}
	\end{equation}
	
	Multiplying the previous expression by $\frac{\tilde{h}_{\nu_{ij}}(j \omega)}{\tilde{h}_{\nu_{ij}}(j \omega)}$, the following is obtained
	\begin{equation}
	e_{ij}(j \omega) = \frac{\tilde{h}_{\nu_{ij}}(j \omega)  - \tilde{h}_{\nu k_{ij}}(j \omega)}{h_{ij}(j \omega)}\frac{\tilde{h}_{\nu_{ij}}(j \omega)}{\tilde{h}_{\nu_{ij}}(j \omega)}
	\end{equation}
	
	Substituting $\tilde{h}_{\nu_{ij}}(j \omega)$ in the numerator by $h_{ij}(j \omega)  - \tilde{h}_{\mu_{ij}}(j \omega)$ and reorganizing the terms, the error separation of Equation \eqref{eq:error_separation} is obtained as
	\begin{equation}
	e_{ij}(j \omega) = -\frac{\tilde{h}_{\nu k_{ij}}(j \omega) - \tilde{h}_{\nu_{ij}}(j \omega)}{\tilde{h}_{\nu_{ij}}(j \omega)}\cdot\frac{h_{ij}(j \omega)  - \tilde{h}_{\mu_{ij}}(j \omega)}{h_{ij}(j \omega)}
	\end{equation}

\end{proof}

The result of Fact \ref{fact:separation_error} enables the separation of the error in two terms. The goal is to find an bound for the error $e_{ij}(j \omega)$. The first step for an error bound of the KMS method is to determine an upper bound for the term $e_{\nu k_{ij}}(j \omega)$. The following theorem shows a theoretical error bound for all frequencies of the term $e_{\nu k_{ij}}(j \omega)$, provided that the input and the output are the same, namely $i = j$. 

\begin{theorem}
	The magnitude of the error $e_{\nu k_{ii}}(j \omega)$ defined in Equation \eqref{eq:error_separation} is bounded by $e_{est}(j\omega)$ defined as
	\begin{equation}
	\left \rvert e_{est}(j\omega) \right \rvert = \frac{\omega^2 + s_e^2}{\omega ^2 + w_m^2} > \left \rvert e_{rk_{ii}}(j\omega) \right \rvert
	\label{eq:error_bound}
	\end{equation}
	for all $\omega \in [0 , \infty]$ such that $\omega \in \mathbb{R}$ given that $\omega_{m} < \omega_{mo+1}$ and that the input and the output are the same, i.e.\ $i = j$. 
	
	\label{theorem:error}
\end{theorem}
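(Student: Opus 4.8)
The plan is to reduce the statement to a scalar rational-function inequality on the imaginary axis and to exploit the sign structure forced by $\bm{B}=\bm{C}^T_{therm}$ together with $i=j$. First I would pass to the block-modal coordinates of Equations \eqref{eq:LTI_block_modal}--\eqref{eq:LTI_block_modal_output} and write the two transfer functions entering $e_{\nu k_{ii}}(j\omega)$ in pole--residue form. With $\alpha_k\le 0$ the eigenvalues and $\omega_k=|\alpha_k|$, the $\nu$-subsystem reads $\tilde{h}_{\nu_{ii}}(s)=\sum_{k=\mu+1}^{n} r_k/(s+\omega_k)$, and since the $i$th output row equals the $i$th input column ($\bm{C}_{m}=\bm{B}_{m}^T$) the residues are squares, $r_k=(\bm{B}_{m})_{ki}^2\ge 0$. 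The ordering hypothesis $w_m<\omega_{\mu+1}$ then guarantees that every pole of the $\nu$-subsystem obeys $\omega_k\ge\omega_{\mu+1}>w_m$, which is the property that ultimately drives the bound. I would treat the channel as scalar, so that the reduction onto $\mathcal{V}_{\nu k}$ uses the single Krylov vector $\bm{v}=(s_e\bm{I}_{\nu}-\bm{\Omega}_{\nu})^{-1}\bm{B}_{\nu,i}$.

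Next I would make this single-pole reduction fully explicit. The one-sided projection $\bm{W}=\bm{V}=\bm{v}$ gives a reduced model with one pole $\hat{\omega}$ and residue $\hat{r}$, and three facts are needed. First, $\hat{\omega}=\big(\sum_k \omega_k r_k/(s_e+\omega_k)^2\big)/\big(\sum_k r_k/(s_e+\omega_k)^2\big)$ is a convex combination of the $\omega_k$, hence $\hat{\omega}\ge\omega_{\mu+1}>w_m$. Second, $\hat{r}=\tilde{B}^2/\tilde{E}$ with $\tilde{B}=\sum_k r_k/(s_e+\omega_k)$ and $\tilde{E}=\sum_k r_k/(s_e+\omega_k)^2$, so Cauchy--Schwarz yields $\hat{r}\le\sum_k r_k$. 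Third, because $\bm{B}=\bm{C}^T_{therm}$ with symmetric $\bm{E},\bm{A}$ makes this a Lanczos-type projection, the reduced transfer function matches both the value and the derivative of $\tilde{h}_{\nu_{ii}}$ at $s_e$; I would verify $\tilde{h}_{\nu k_{ii}}(s_e)=\tilde{h}_{\nu_{ii}}(s_e)$ and $\tilde{h}_{\nu k_{ii}}'(s_e)=\tilde{h}_{\nu_{ii}}'(s_e)$ directly from these formulas.

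The assembly is then short. The double moment match means $\tilde{h}_{\nu k_{ii}}-\tilde{h}_{\nu_{ii}}$ carries a factor $(s-s_e)^2$; evaluating at $s=j\omega$ produces exactly the numerator $|j\omega-s_e|^2=\omega^2+s_e^2$ of the claimed estimator. Dividing by $\tilde{h}_{\nu_{ii}}$ to form $e_{\nu k_{ii}}$, the single reduced pole contributes a factor $|j\omega+\hat{\omega}|^{-1}$, which I would bound above by $(\omega^2+w_m^2)^{-1/2}$ using $\hat{\omega}>w_m$, thereby generating the denominator $\omega^2+w_m^2$. After cancelling the common $(\omega^2+s_e^2)$, the theorem reduces to a polynomial-domination inequality on the imaginary axis whose two sides have equal degree in $\omega$ (the left governed by $\sum_k r_k-\hat{r}$, the right by $\sum_k r_k$), so that the matched leading coefficients together with $\hat{r}\le\sum_k r_k$ already control the high-frequency limit.

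The step I expect to be the true obstacle is closing this last inequality in full generality. For two remaining modes the residual polynomial factor is constant and the inequality collapses to $(r_1+r_2)\,w_m<r_1\omega_2+r_2\omega_1$, which is immediate from $\omega_1,\omega_2>w_m$; but for more than two remaining modes the factor $(s-s_e)^{-2}\big(\tilde{h}_{\nu k_{ii}}-\tilde{h}_{\nu_{ii}}\big)$ is a genuine rational function whose numerator roots must be located. I would attempt to control it through the Stieltjes structure of $\tilde{h}_{\nu_{ii}}$, namely the interlacing of the zeros of its numerator with the poles $-\omega_k$, all of modulus exceeding $w_m$, combined with the sign $\sum_k r_k-\hat{r}\ge 0$, so that the domination follows from the strict pole separation $w_m<\omega_{\mu+1}$. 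Everything else is routine algebra; the non-negativity of the residues and this separation are precisely what render the estimate strict for all $\omega\in[0,\infty]$.
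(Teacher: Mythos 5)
Up to its final step, your route coincides with the paper's own proof. The paper also works in the modal $\nu$-coordinates with $\tilde{h}_{\nu_{ii}}(s)=\sum_k b_k^2/(s+\omega_k)$, computes the one-vector Krylov projection explicitly, and establishes exactly your three facts: (i) the reduced pole $\tilde{a}/\tilde{e}$ is a weighted mean of the $\omega_k$ and hence exceeds $\omega_m$; (ii) the high-frequency limit of $\lvert e_{\nu k_{ii}}\rvert$ is below $1$, via a bound on $\tilde{b}^2/\tilde{e}$; (iii) the error has a double zero at the expansion point because the symmetric one-sided (Lanczos-type) projection matches two moments. It additionally proves, by the same contradiction argument you would need, that the zeros of $\tilde{h}_{\nu_{ii}}(s)$ are real and lie above the smallest remaining pole, so that every pole of the error transfer function is real and exceeds $\omega_m$. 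Your local arguments are sound and in two places cleaner than the paper's: Cauchy--Schwarz gives $\hat{r}\le\sum_k r_k$ directly, where the paper uses the cruder estimate $\tilde{e}\ge\tilde{b}/(\omega_{\mu+1}+s_e)$, and you exhibit the factor $(s-s_e)^2$ explicitly rather than merely citing the moment-matching property.

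The step you flag as the true obstacle is indeed a genuine gap in your proposal: the domination inequality is only proved for two remaining modes, and the Stieltjes/interlacing program for the general case is a plan, not an argument. You should know, however, that the paper does not close this step algebraically either. Instead of your polynomial domination, it argues by exclusion of the three counterexample ``shapes'': since all poles and zeros of $e_{\nu k_{ii}}(s)$ are real, its magnitude on the imaginary axis is a resonance-free ratio of monotone factors $\sqrt{\omega^2+a^2}$, and the paper then asserts, with supporting figures, that such a function with a double zero near the origin, all poles above $\omega_m$, and limit below $1$ at infinity cannot cross the estimator. So your honest stopping point is precisely the paper's least rigorous step, and completing your interlacing argument would make the theorem \emph{more} solid than the published proof. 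If you want to finish it, a convenient vehicle is the quadrature-error identity $\tilde{h}_{\nu_{ii}}(s)-\tilde{h}_{\nu k_{ii}}(s)=\frac{(s-s_e)^2}{(s+\hat{\omega})^2}\sum_k\frac{b_k^2(\omega_k-\hat{\omega})^2}{(\omega_k+s_e)^2(s+\omega_k)}$, whose sum vanishes at $s=-\hat{\omega}$ so the pole there is in fact simple; dividing by $\tilde{h}_{\nu_{ii}}$ and invoking $\hat{\omega}>\omega_m$ reduces the theorem to comparing two Stieltjes sums with nonnegative weights, which isolates in closed form exactly the residual difficulty you identified.
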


\begin{proof}
	The FRF of the system projected into $\mathcal{V}_{\nu}$, is
	\begin{equation}
	\tilde{h}_{\nu_{ii}}(j\omega) = \sum_{k=1}^{\nu}\frac{b_k^2}{(j\omega + \omega_{k})}
	\label{eq:remaining_transfer}
	\end{equation}
	where $\nu = n - \mu$ is the dimension of the subspace $\mathcal{V}_\nu$, $\omega_{k}$ are the absolute value of the eigenvalues of the system, and $b_k$ corresponds to the $k$th element of the $i$th column of $\bm{B}_{\nu}$. The subspace $\mathcal{V}_{\nu k}$ can be defined according to Equation \eqref{eq:subspace_rkrylov} as $\text{range}(\bm{v})$ where	
	\begin{equation}
	\bm{v}^T = \begin{bmatrix}
	\frac{b_{1}}{\omega_{1}+s_e}  & 
	\frac{b_{2}}{\omega_{2}+s_e}& 
	\dots &
	\frac{b_{k}}{\omega_{k}+s_e}& 
	\dots &
	\frac{b_{r}}{\omega_{r}+s_e}
	\end{bmatrix}
	\end{equation}
	Projecting the system of Equation \eqref{eq:remaining_transfer} into $\mathcal{V}_{\nu k}$, the following system is obtained
	
	\begin{equation}
	\tilde{e} \dot{\tilde{x}} + \tilde{a}\tilde{x} =  \tilde{b} u(t) 
	\end{equation}
	where $\tilde{e}$, $\tilde{a}$, and $\tilde{b}$ take the following values 
	
	\begin{equation}
	\tilde{a} = \sum_{k=1}^{\nu} \frac{b_{k}^2}{(\omega_{k}+s_e)^2}\omega_{k}
	\label{eq:FRF_kr}
	\end{equation}
	
	\begin{equation*}
	\tilde{e} = \sum_{k=1}^{\nu} \frac{b_{k}^2}{(\omega_{k}+s_e)^2}
	\end{equation*}
	
	\begin{equation*}
	\tilde{b} = \sum_{k=1}^{\nu} \frac{b_{k}^2}{\omega_{k}+s_e} = \tilde{c}
	\end{equation*}
	
	The transfer function of the reduced system $\tilde{h}_{\nu k}$ is
	\begin{equation}
	\tilde{h}_{\nu k_{ii}}(j \omega) = \frac{\tilde{b}^2}{j \omega \tilde{e} +\tilde{a}} =\frac{\left(\sum_{k=1}^{\nu} \frac{b_{k}^2}{\omega_{k}+s_e}\right)^2}{j \omega \sum_{k=1}^{\nu} \frac{b_{k}^2}{(\omega_{k}+s_e)^2}+ \sum_{k=1}^{\nu} \frac{b_{k}^2}{(\omega_{k}+s_e)^2}\omega_{k} } 
	\end{equation}
	
	Firstly, some properties of the error $e_{\nu k_{ii}}(j \omega)$ need to be discussed. According to Equation \eqref{eq:error_reduced_krylov}, the poles of the transfer function of the real error, $e_{\nu k_{ii}}(s)$ with $s \in \mathbb{C}$, are the poles of $\tilde{h}_{\nu k_{ij}}(s)$, the poles $\tilde{h}_{\nu_{ii}}(s)$, and the zeros of $\tilde{h}_{\nu_{ii}}(s)$. Given that the system matrices are positive semi-definite, all the poles of $\tilde{h}_{\nu_{ii}}(s)$ are real. Additionally, the poles of $\tilde{h}_{\nu_{ii}}(s)$ are real, as it can be seen from Equation \eqref{eq:FRF_kr}. Furthermore, the zeros of the transfer function $\tilde{h}_{\nu_{ii}}(s)$ are real numbers. This fact can be shown by contradiction. Let $s = \alpha \pm j\beta$ be zeros of the transfer function $\tilde{h}_{\nu_{ii}}(s)$, such that $\beta > 0 $. The zero of the transfer function needs to satisfy that 
	
	\begin{equation}
		\tilde{h}_{\nu_{ii}}(\alpha + j\beta) = \sum_{k=1}^{\nu}\frac{b_k^2}{(\alpha + j\beta + \omega_{k})} = \sum_{k=1}^{\nu}\frac{b_k^2 }{((\alpha +\omega_{k})^2 + \beta^2)} (\alpha +  \omega_{k} - j\beta )  = 0
	\end{equation}
	
	Given that $\beta > 0 $ and $\frac{b_k^2 }{((\alpha +\omega_{k})^2 + \beta^2)} \geq 0$, the transfer function is only zero for $\beta = 0$, reaching a contradiction. Therefore, the zeros of transfer function $\tilde{h}_{\nu_{ii}}(s)$ are real. Thus, the poles of $e_{\nu k_{ii}}(j \omega)$ are all real. This leads to a smooth FRF response, without any resonance frequency. This property is relevant for the derivation of the error bound.
	
	In order to proof that the proposed estimator of Equation \eqref{eq:error_bound} bounds the error for all frequencies, several counterexamples are created, which are illustrated in Figure \ref{fig:couter}. Figure \ref{fig:magnitude} shows the case where the magnitude of the error, $e_{\nu k_{ii}}(j\omega)$, is higher than the magnitude of the estimator, $e_{est}(j\omega)$, at high frequencies. Figure \ref{fig:poles} depicts the case where the poles of the error at lower frequencies than the poles of the estimator. The third counterexample in Figure \ref{fig:zeros} illustrates the case where the slope of the error is lower than the slope of the estimator. The slope of the error is related with the number of zeros at low frequency. The error estimator has two zeros at the expansion point $s_e$. Thus, it needs to be shown that the actual error at least two zeros at the expansion point, which is placed close to zero. Figure \ref{fig:good} shows graphically that if the other three counterexamples are not true, the error estimator is an upper bound of the FRF of the of the real error for the whole frequency range.

	\begin{figure}
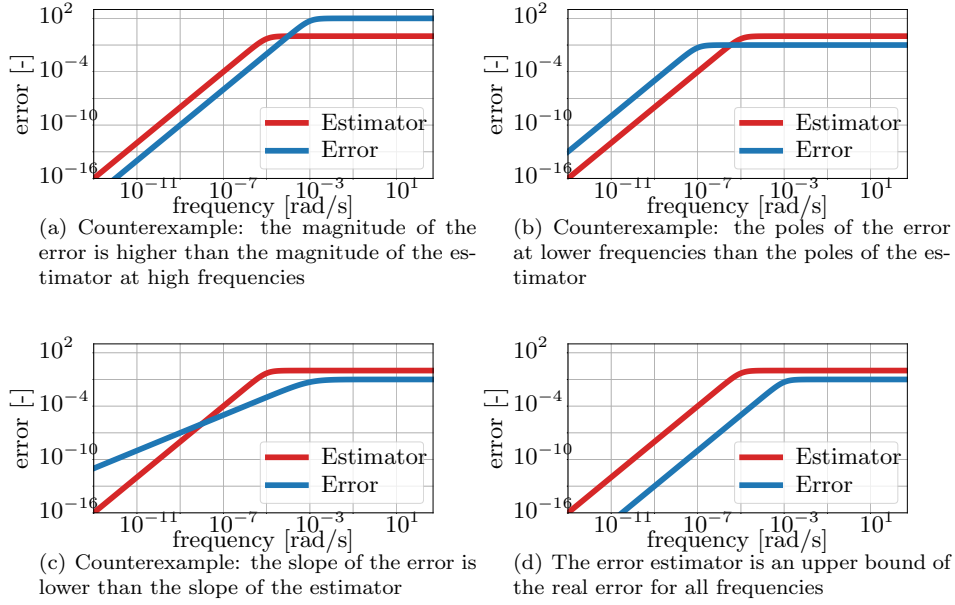

		\centering
		\subfigure[Counterexample: the magnitude of the error is higher than the magnitude of the estimator at high frequencies ]{\label{fig:magnitude}\def \svgwidth{0.48\textwidth}{\small\import{figures/}{error_magnitude.pdf_tex}}}\hfill
		\subfigure[Counterexample: the poles of the error at lower frequencies than the poles of the estimator]{\label{fig:poles}\def \svgwidth{0.48\textwidth}{\small\import{figures/}{error_poles.pdf_tex}}}
		\subfigure[Counterexample: the slope of the error is lower than the slope of the estimator]{\label{fig:zeros}\def \svgwidth{0.48\textwidth}{\small\import{figures/}{error_zeros.pdf_tex}}}\hfill
		\subfigure[The error estimator is an upper bound of the real error for all frequencies ]{\label{fig:good}\def \svgwidth{0.48\textwidth}{\small\import{figures/}{error_good.pdf_tex}}}
		\caption
		{\small Magnitude of the error of the estimator $e_{est}(j\omega)$ compared to theoretically possible magnitude of the error $e_{\nu k_{ii}}(j\omega)$} 
		\label{fig:couter}
	\end{figure}
	
	Therefore, it needs to be proven that the counterexamples of Figure \ref{fig:couter} are not possible following the next steps:
	
	\begin{enumerate}[label=Step \arabic*:]
		\item $\lim\limits_{\omega \to \infty}\rvert e_{est}(j \omega) \rvert> \lim\limits_{\omega \to \infty}\rvert e_{\nu k_{ii}}(j \omega) \rvert$
		\item All the poles of the actual error $e_{\nu k_{ii}}(j \omega)$ are at a higher frequency than the ones of the error estimator $e_{est}(j \omega)$
		\item $e_{\nu k_{ii}}(j \omega)$ has at least two zeros at $\omega$ close to zero
	\end{enumerate}
	
	The first condition refers to the magnitude of the FRF at infinity, which is associated to the Markov parameters. From the definition of the error estimator $\lim\limits_{\omega \to \infty}\left\lvert e_{est}(j\omega) \right\rvert = 1$. The magnitude of $e_{\nu k_{ii}}(j \omega)$ at infinity can be expressed as 
	\begin{equation}
	\lim\limits_{\omega \to \infty}\left\lvert e_{\nu k_{ii}}(j\omega) \right\rvert = \lim\limits_{\omega \to \infty}\left\lvert  \frac{\tilde{h}_{\nu k_{ii}}(j \omega)}{\tilde{h}_{\nu_{ii}}(j \omega)}  -1\right\rvert = \left\lvert  \lim\limits_{\omega \to \infty}\frac{\tilde{h}_{\nu k_{ii}}(j \omega)}{\tilde{h}_{\nu_{ii}}(j \omega)}  -1\right\rvert < 1
	\label{eq:condition}
	\end{equation}

	In order to evaluate the limit of the error, the limit $\lim\limits_{\omega \to \infty}\frac{\tilde{h}_{\nu k_{ii}}(j \omega)}{\tilde{h}_{\nu_{ii}}(j \omega)}$ needs to be evaluated. 
	
	\begin{equation}
		\lim\limits_{\omega \to \infty}\frac{\tilde{h}_{\nu k_{ii}}(j \omega)}{\tilde{h}_{\nu_{ii}}(j \omega)} = \frac{\frac{1}{j \omega} \frac{\tilde{b}^2}{\tilde{e} }}{\frac{1}{j \omega}\sum_{k=1}^{\nu}b_k^2} = \frac{\frac{\tilde{b}^2}{\tilde{e} }}{\sum_{k=1}^{\nu}b_k^2}
		\label{eq:limit_magnitude}
	\end{equation}
	where $\frac{\tilde{b}^2}{\tilde{e} }$ can be expressed according to Equation \eqref{eq:FRF_kr} as
	\begin{equation}
	\frac{\tilde{b}^2}{\tilde{e} } = \frac{\left(\sum_{k=1}^{\nu} \frac{b_{k}^2}{\omega_{k}+s_e}\right)^2}{ \sum_{k=1}^{\nu} \frac{b_{k}^2}{(\omega_{k}+s_e)^2} } 
	\end{equation}
	
	Equation \eqref{eq:limit_magnitude} shows that the limit $\lim\limits_{\omega \to \infty}\frac{\tilde{h}_{\nu k_{ii}}(j \omega)}{\tilde{h}_{\nu_{ii}}(j \omega)}$ is a positive real number. Therefore, the condition on the magnitude of the error of Equation \eqref{eq:condition} can be expressed as 
	\begin{equation}
		0 < \lim\limits_{\omega \to \infty}\frac{\tilde{h}_{\nu k_{ii}}(j \omega)}{\tilde{h}_{\nu_{ii}}(j \omega)} <2
		\label{eq:condition_2}
	\end{equation}
	In fact, an upper bound for the term $\frac{\tilde{b}^2}{\tilde{e} }$ can be found as follows
	\begin{equation}
	 \frac{\tilde{b}^2}{\tilde{e} } < \frac{\left(\sum_{k=1}^{\nu} \frac{b_{k}^2}{\omega_{k}+s_e}\right)^2}{\frac{1}{\omega_1+s_e} \sum_{k=1}^{\nu} \frac{b_{k}^2}{\omega_{k}+s_e} } =  \sum_{k=1}^{\nu} \frac{\omega_{1}+s_e}{\omega_{k}+s_e} b_k^2 < \sum_{k=1}^{\nu}b_k^2 
	\end{equation}
	considering that $\omega_1 \leq \omega_k$ for all $k \in [1, \nu]$. The upper bound for the term $\frac{\tilde{b}^2}{\tilde{e} }$ implies that the condition of Equation \eqref{eq:condition_2} is satisfied. Therefore, the magnitude of the actual error is bounded by 1, as stated in Equation \eqref{eq:limit_magnitude}. This completes the first step of the proof, showing that the magnitude of the error estimator is an upper bound of the magnitude of the actual error, $e_{\nu k_{ii}}(j \omega)$, at infinity.

	The second step in this proof is ensuring that the poles of the actual error are at a higher frequency than the ones of the error estimator. Let $e_{\nu k_{ii}}(s)$ the transfer function of the actual error, where $s \in \mathbb{C}$. The error estimator, $e_{est}(s)$, defined in Equation \eqref{eq:error_bound} has two poles at $\omega_m$. The poles of the error $e_{\nu k_{ii}}(s)$ are the poles of $\tilde{h}_{\nu k_{ii}}(s)$, the poles $\tilde{h}_{\nu_{ii}}(s)$, and the zeros of $\tilde{h}_{\nu_{ii}}(s)$, which need to be at a lower frequency than $\omega_m$. 
	
	The poles of $\tilde{h}_{\nu_{ii}}(s)$ are $\omega_1 \dots \omega_\nu$. By definition, these poles are at a higher frequency than $\omega_m$. 
	
	The zeros of $\tilde{h}_{\nu_{ij}}(s)$ are real numbers, as discussed in the beginning of this proof. Furthermore, it needs to be shown that these zeros are at higher frequencies than $\omega_m$, which is proven by contradiction. Assume that $\omega_0$ is a zero of the transfer function, i.e.\ $\tilde{h}_{\nu_{ii}}(s=-\omega_0)=0$, such that $0<\omega_0 <\omega_1$. Substituting this into Equation \eqref{eq:remaining_transfer}, the transfer function is 
	\begin{equation}
	\tilde{h}_{\nu_{ii}}(s=-\omega_0) = \sum_{k=1}^{\nu}\frac{b_k^2}{(-\omega_0 + \omega_{k})}
	\end{equation}
	However, given that $0\leq\omega_1 \leq\omega_{\nu}$ the all the terms of the summation are strictly positive. Thus,  $\tilde{h}_{\nu_{ii}}(s=-\omega_0)\neq 0$, reaching a contradiction. Therefore, it is shown that the zeros of the transfer function of the reduced system $\tilde{h}_{\nu_{ii}}(s)$ are at a higher frequency than $\omega_m$. 
	
	The pole $\tilde{h}_{\nu k_{ii}}(s)$, $\omega_{\nu k_{ii}}$, can be written as
	\begin{equation}
	\omega_{\nu k_{ii}} = \frac{\tilde{a}}{\tilde{e}} = \frac{\sum_{k=1}^{\nu} \frac{b_{k}^2}{(\omega_{k}+s_e)^2}\omega_{k}}{\sum_{k=1}^{\nu} \frac{b_{k}^2}{(\omega_{k}+s_e)^2}}
	\label{eq:zero_TF}
	\end{equation}
	which can be understood as a weighted summation, where the weight factors are $\frac{b_{k}^2}{(\omega_{k}+s_e)^2}$. This shows that the poles of the transfer function satisfy $\omega_1 \leq \omega_0 \leq \omega_r $. The location of $\omega_0$ depends on the static gains, \ $b_{k}$. The more controllable and observable a mode $\omega_k $ is, the closer $\omega_0 $ is to $\omega_k$. Thus, the pole of the transfer function of the reduced system $\tilde{h}_{\nu k_{ii}}(s)$ is at a higher frequency than $\omega_m$. This concludes the second step, ensuring that the poles of the actual error $e_{\nu k_{ii}}(s)$ are at a higher frequency than the ones of the error estimator.

	The third step needs to ensure that the transfer function of the actual error, $e_{\nu k_{ii}}(s)$, has at least two zeros close to zero. The zeros of the actual error are related with the zeros $\tilde{h}_{\nu k_{ii}}(s) - \tilde{h}_{\nu_{ii}}(s)$. Due to the properties of moment matching Krylov reduction, the reduced-order system matches at least two moments of the transfer function at the expansion point, $s_e$, as explained by Antoulas \cite{Antoulas2005}. By choosing the expansion point sufficiently low, the $\tilde{h}_{\nu k_{ii}}(s) - \tilde{h}_{\nu_{ii}}(s)$ has two zeros close to zero. Therefore, the transfer function $e_{\nu k_{ii}}(s)$ has also two zeros close to zero. 
	
	This proof shows that the error $e_{\nu k_{ii}}( j \omega)$ has a bounded magnitude at infinity, the poles of the error are a higher frequency than the poles of the estimator, and the error has at least two zeros close to zero frequency. The combination of these three conditions proves that the magnitude of the error $e_{\nu k_{ii}}( j \omega)$ is bounded by the estimator $\left \rvert e_{est}(j\omega) \right \rvert$.

\end{proof}

The result of Theorem \ref{theorem:error} proposes an error bound of the term $e_{\nu k_{ii}}(j \omega)$ of Equation \eqref{eq:error_separation}. In order to estimate the reduction error, $e_{ii}(j \omega) $, an upper bound of the term $e_{\mu_{ii}}(j \omega)$ of Equation \eqref{eq:error_separation} is also required. 

\begin{fact}
	The magnitude of the error $e_{\mu_{ii}}(j \omega)$ defined in Equation \eqref{eq:error_separation} is bounded by 1, i.e.\
	\begin{equation}
	\left \rvert e_{\mu_{ii}}(j\omega) \right \rvert \leq 1
	\end{equation}
	for all $\omega \in [0 , \infty]$ such that $\omega \in \mathbb{R}$. 
	
	\label{fact:error_modal}
\end{fact}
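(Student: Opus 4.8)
The plan is to reduce the claim $|e_{\mu_{ii}}(j\omega)| \le 1$ to an elementary statement about two complex numbers lying in the same closed quadrant. First I would use that $\mathcal{V}_{\mu}$ is spanned by the $\bm{E}$-orthonormal eigenvectors $\bm{\phi}_1,\dots,\bm{\phi}_{\mu}$, so that the one-sided Galerkin projection onto $\mathcal{V}_{\mu}$ simply retains the first $\mu$ rows and columns of the diagonal modal system of Equation \eqref{eq:LTI_block_modal}. Together with the assumption $\bm{B} = \bm{C}_{therm}^T$, this yields the truncated modal transfer function
\begin{equation*}
\tilde{h}_{\mu_{ii}}(j\omega) = \sum_{k=1}^{\mu}\frac{b_k^2}{j\omega + \omega_{k}},
\end{equation*}
exactly parallel to $\tilde{h}_{\nu_{ii}}$ in Equation \eqref{eq:remaining_transfer}. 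Since $h_{ii} = \tilde{h}_{\mu_{ii}} + \tilde{h}_{\nu_{ii}}$, the modal error of Equation \eqref{eq:modal_error} collapses to $e_{\mu_{ii}}(j\omega) = \tilde{h}_{\nu_{ii}}(j\omega)/h_{ii}(j\omega)$, and it suffices to prove $|\tilde{h}_{\nu_{ii}}(j\omega)| \le |h_{ii}(j\omega)|$.

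Next I would exploit the same sign structure already established in Theorem \ref{theorem:error}. For $\omega \ge 0$ each partial-fraction term satisfies
\begin{equation*}
\frac{b_k^2}{j\omega + \omega_{k}} = \frac{b_k^2\,\omega_{k}}{\omega_{k}^2 + \omega^2} - j\,\frac{b_k^2\,\omega}{\omega_{k}^2 + \omega^2},
\end{equation*}
so with $b_k^2 \ge 0$ and $\omega_{k} \ge 0$ its real part is non-negative and its imaginary part is non-positive. Both $\tilde{h}_{\mu_{ii}}$ and $\tilde{h}_{\nu_{ii}}$, being sums of such terms, therefore lie in the closed fourth quadrant of the complex plane.

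I would then finish with a \emph{no-cancellation} estimate. Writing $\tilde{h}_{\mu_{ii}} = a_1 - jc_1$ and $\tilde{h}_{\nu_{ii}} = a_2 - jc_2$ with $a_1,a_2,c_1,c_2 \ge 0$, so that $h_{ii} = (a_1+a_2) - j(c_1+c_2)$, expanding gives
\begin{equation*}
|h_{ii}|^2 - |\tilde{h}_{\nu_{ii}}|^2 = a_1^2 + c_1^2 + 2a_1 a_2 + 2c_1 c_2 \ge 0,
\end{equation*}
which is immediate because every summand is a product of non-negative quantities. This yields $|\tilde{h}_{\nu_{ii}}| \le |h_{ii}|$ and hence the bound. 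I would keep the inequality non-strict ($\le 1$) to absorb the degenerate case, since equality forces $a_1 = c_1 = 0$, i.e.\ all retained modes unobservable in the $ii$ channel.

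The main obstacle is not the inequality, which is trivial once set up, but the two bookkeeping points guarding it. The first is establishing that the Galerkin projection onto the modal subspace reproduces exactly the truncated sum, with the residual equal to $\tilde{h}_{\nu_{ii}}$ and with real, non-negative residues $b_k^2$; the decoupling of retained from discarded modes rests on the simultaneous $\bm{E}$-orthonormal diagonalization, while the non-negativity of the residues is precisely where $\bm{B} = \bm{C}_{therm}^T$ enters and is essential for the quadrant argument. The second is confirming that $h_{ii}(j\omega)$ does not vanish so that the ratio is well defined; for this I would note that $\mathrm{Re}\,h_{ii}(j\omega) = \sum_{k} b_k^2\,\omega_{k}/(\omega_{k}^2 + \omega^2) > 0$ at every finite $\omega$ as soon as one controllable mode has $\omega_{k} > 0$, so the denominator stays nonzero on $(0,\infty)$, and the limiting behaviour as $\omega \to \infty$ is covered by the same quadrant inequality applied to the decaying magnitudes.
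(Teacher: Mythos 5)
Your proof is correct, and it takes a genuinely different---and substantially more rigorous---route than the paper's. The paper's own proof starts the same way, rewriting the modal error as $e_{\mu_{ii}}(j\omega) = \tilde{h}_{\nu_{ii}}(j\omega)/h_{ii}(j\omega)$, but from there it argues only by interpolation between two extreme cases: if all modes in $\mathcal{V}_{\nu}$ are uncontrollable/unobservable the error is $0$; if all modes in $\mathcal{V}_{\mu}$ are uncontrollable/unobservable the error is $1$; and ``for any other case'' the magnitude is simply asserted to lie between $0$ and $1$. That last assertion is exactly the nontrivial content of the fact, since in general adding one complex number to another can \emph{decrease} the modulus; the paper never rules out such cancellation. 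Your quadrant argument supplies precisely this missing ingredient: because $\bm{B} = \bm{C}_{therm}^T$ makes the modal residues $b_k^2$ non-negative and negative semi-definiteness makes the poles real and non-positive, both $\tilde{h}_{\mu_{ii}}(j\omega)$ and $\tilde{h}_{\nu_{ii}}(j\omega)$ lie in the closed fourth quadrant for $\omega \geq 0$, and then
\begin{equation*}
\left\lvert h_{ii}(j\omega)\right\rvert^2 - \left\lvert \tilde{h}_{\nu_{ii}}(j\omega)\right\rvert^2 = a_1^2 + c_1^2 + 2a_1 a_2 + 2c_1 c_2 \geq 0
\end{equation*}
shows no cancellation can occur. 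What the paper's version buys is brevity and a physical reading (the bound interpolates between fully retained and fully discarded dynamics); what yours buys is an actual proof of the intermediate regime, an explicit identification of where the hypothesis $\bm{B} = \bm{C}_{therm}^T$ and the sign-definiteness of the system matrices enter (the same structural facts the paper exploits in the proof of Theorem \ref{theorem:error}, but never invokes here), a characterization of the equality case, and attention to the well-definedness of the ratio $\tilde{h}_{\nu_{ii}}/h_{ii}$, which the paper passes over silently.
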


\begin{proof}

	The FRF of the error $e_{\mu_{ii}}(j \omega)$ defined in Equation \eqref{eq:modal_error} of Fact \ref{fact:separation_error} can be expressed as 
	\begin{equation}
	e_{\mu_{ii}}(j \omega) = \frac{\tilde{h}_{\nu_{ii}}(j\omega )}{h_{ii}(j \omega )}
	\end{equation}
	considering that $\tilde{h}_{\nu_{ii}}(j\omega ) = h_{ii}(j\omega ) -\tilde{h}_{\mu_{ii}}(j\omega )$, as shown in Fact \ref{fact:r_modal}. In order to prove this fact, the two extreme conditions are considered. Firstly, the case where all modes in $\mathcal{V}_{\nu}$ are not observable and not controllable is analyzed. In this case, the magnitude of $\tilde{h}_{\nu_{ii}}$ is zero. Thus, the magnitude of the error $ \left \rvert e_{\mu_{ii}}(j \omega) \right \rvert$ is zero for all frequencies, as all the relevant modes describing the response of the system are contained in $\mathcal{V}_{\mu}$. Secondly, it is considered that all modes in $\mathcal{V}_{\mu}$ are not observable and not controllable. In this case, the original system and the system projected into $\mathcal{V}_{\nu}$  are the same, i.e.\ $\tilde{h}_{\nu_{ii}} = h_{ii}(j \omega )$. Thus, the magnitude of the error $ \left \rvert e_{\mu_{ii}}(j \omega) \right \rvert$ is 1, as none of the relevant modes describing the response of the system are included in $\mathcal{V}_{\mu}$. These two cases represent the two extreme conditions. For any other case, some of the modes $\mathcal{V}_{\mu}$ and $\mathcal{V}_{\nu}$ are observable and controllable, leading to a magnitude of the error between 0 and 1. Therefore, $\left \rvert e_{\mu_{ii}}(j\omega) \right \rvert \leq 1$ for all frequencies. 
\end{proof}

The results of Theorem \ref{theorem:error} and Fact \ref{fact:error_modal} state that an error bound of $e_{ii}(j \omega)$ for the KMS reduction error is 
\begin{equation}
	\left \rvert e_{ii}(j \omega) \right \rvert = \left \rvert e_{\mu_{ii}}(j\omega ) \right \rvert \left \rvert e_{\nu k_{ii}}(j\omega ) \right \rvert < \left \rvert e_{\nu k_{ii}}(j\omega) \right \rvert <  \frac{\omega^2 + s_e^2}{\omega ^2 + w_m^2} 
	\label{eq:error_KMS}
\end{equation}

Theorem \ref{theorem:error} shows a theoretical error bound for the KMS reduction, provided that the same inputs and outputs are considered. This error bound estimates a-priori the error and enables to choose how many modes need to be included in the KMS projection basis. Provided a frequency range of interest $[0, \omega_{max}]$, the error estimator determines that all modes under $\omega_{m}$ need to be included in the reduction basis so that the magnitude of the error of the reduced-order system, $\left \rvert e_{ii}(j \omega) \right \rvert$, does not exceed a certain value $\epsilon$. Section \ref{sec:results} illustrates the validity of the error estimator with numerical examples for different combinations of the inputs and outputs.   

\section{Parametric KMS reduction}

The KMS method creates a reduction basis that captures the most relevant information of the reduction system in the frequency range of interest. This section presents a method to extend the KMS basis to include the possibility to modify the values of the HTC in the reduced system. The system of Equation \eqref{eq:system_distributed} shows the parametric dependency as an affine representation. This affine formulation of the system can be interpreted as bilinear system. The term $\sum_{i=1}^{n_c}h_i\bm{D}_i \bm{x}(t)$ can be understood as an input of the system that depends linearly on the state. Bilinear systems are a special class of non-linear systems that are linear in the input and state, but not jointly linear in the input and state.

Before presenting the reduction approach, several considerations are required on the convection matrices $\bm{D}_i$ of Equation \eqref{eq:system_distributed}. Equation \eqref{eq:convection_FEM} defines the convection matrix of an element $e$, which is integrated numerically. The evaluation of the integral at the Gauss points leads to off-diagonal terms in $\bm{K}^e_{conv}$. This is called consistent convection matrix. However, it is customary to diagonalize this matrix in order to avoid numerical oscillations close to the convective boundary condition, as pointed out by Bruns \cite{BRUNS20072859}. Therefore, a diagonal convection matrix $\bm{D}_i$ is considered, with zeros for all the degrees of freedom (dof) not affected by the convective boundary condition $i$. The diagonal convection matrix $\bm{D}_i$ can be understood as a linear map $\mathcal{D}_i$, whose range has a dimension equal to the number of nodes in the convective boundary,  $n_{d_i}$.

Another important consideration for the development of the reduction method is determining the dimension of the subspace spanned by $(s_e\bm{E-\bm{A}})^{-1}\bm{D}_i$. The matrix $(s_e\bm{E-\bm{A}})^{-1}$ can be understood as a linear transformation, $\mathcal{A}$. Given that $(s_e\bm{E-\bm{A}})^{-1}$ is invertible, $\mathcal{A}$ is a bijective linear transformation, i.e.\ $\text{dim}(\text{range}((s_e\bm{E-\bm{A}})^{-1})) = n$. Thus,

\begin{equation}
\text{dim}(\text{range}(\mathcal{A} \circ \mathcal{D}_i)) = n_{d_i} 
\end{equation}  

The intersection of the subspaces spanned by the several convection matrices also needs to be discussed. In thermo-mechanical models of mechatronic systems, the different convective boundary conditions affect different surfaces. Thus, it can be stated that $\Gamma_i \cap \Gamma_j = 0 \: \forall i\neq j$. Therefore

\begin{equation}
\text{range}(\mathcal{D}_i) \cap \text{range}(\mathcal{D}_j) = 0 \: \forall i\neq j
\end{equation}

This can be extended to the range of $\mathcal{A} \circ \mathcal{D}_i$ as

\begin{equation}
\text{range}(\mathcal{A} \circ \mathcal{D}_i) \cap \text{range}(\mathcal{A} \circ \mathcal{D}_j) = 0 \: \forall i\neq j
\label{eq:intersection}
\end{equation}

Let $\bm{V}_{KMS}$ be the KMS reduction basis of the system without parametric convective boundary conditions. The basis $\bm{V}_{KMS}$  spans the linear subspace defined in Equation \eqref{eq:subspace_KMS}. All the states in the KMS are a linear combination of the columns of $\bm{V}_{KMS}$, i.e.\ $\bm{x} \in \text{span}(\bm{V}_{KMS})$.

In order to consider the consider the state-dependent inputs, $\bm{D}_i \bm{x}$, an iterative approach is applied. The states multiplied by the convection matrix, i.e.\  $\bm{D}_i\bm{V}_{KMS}$, provide a new set of inputs for the reduction. The matrix $\bm{D}_i\bm{V}_{KMS}$ can be added as further inputs to the Krylov subspace of Equation \eqref{eq:subspace_krylov}, providing a new set of vectors $\bm{V}^1_{i}$. This set of new basis vectors multiplied by the convection matrix, i.e.\ $\bm{D}_i\bm{V}^1_{i}$, provide new inputs. They can be added to the Krylov subspace delivering a new set of basis vectors $\bm{V}^2_{i}$. This iterative process can be expressed as
\begin{equation}
\begin{split}
\text{span}(\bm{V}_{KMS}) = \text{span}((s_e\bm{E-\bm{A}})^{-1}\bm{B}) + \text{span}(\bm{V}_{\mu}) \\
\bm{V}^1_{i} = (s_e\bm{E-\bm{A}})^{-1}\bm{D}_i\bm{V}_{KMS} \\
\dots\\
\bm{V}^k_{i} = (s_e\bm{E-\bm{A}})^{-1}\bm{D}_i\bm{V}^{k-1}_i\\
\dots\\
\bm{V}^{n_{me}}_{i} = (s_e\bm{E-\bm{A}})^{-1}\bm{D}_i\bm{V}^{n_{me}-1}_i
\end{split}
\label{eq:bilinear}
\end{equation}
where $n_{me}$ is the number of iterations. The sum of the subspaces spanned by the matrices $\bm{V}_{KMS}$, $\bm{V}^1_{i}$, $\dots$, $\bm{V}^k_{i}$, $\dots$, $\bm{V}^{n_{me}}_{i}$ is the reduced subspace $\mathcal{V}_{i}$ 

\begin{multline}
\mathcal{V}_{i} = \text{span} \{\bm{V}_{KMS}\}  + \text{span}\{ (s_e\bm{E-\bm{A}})^{-1}\bm{D}_i\bm{V}_{KMS} \} + \dots \\+ \text{span} \{ \left ( (s_e\bm{E-\bm{A}})^{-1}\bm{D}_i \right )^{n_{me}-1}\bm{V}_{KMS} \} 
\end{multline} 

Comparing the definition of $\mathcal{V}_{i}$ with the definition of the Krylov subspace, it can be stated that 

\begin{equation}
\mathcal{V}_{i} = \mathcal{K}_{n_{me}}  \{(s_e\bm{E-\bm{A}})^{-1}\bm{D}_i, \bm{V}_{KMS} \}
\end{equation}  

For the creation of the subspace $\mathcal{V}_{i}$, $n_{me} < n_{d_i}$ terms are selected. This process of adding new vectors to the reduction basis $\bm{V}_{KMS}$ can be understood as a Krylov reduction of the subspace spanned by $(s_e\bm{E-\bm{A}})^{-1}\bm{D}_i$. The reduced subspace can be calculated for all the $n_c$ convective boundary  conditions. Given $\mathcal{V}_{i}$ and $\mathcal{V}_{j}$ for two different convective boundary conditions, it can be stated that 
\begin{equation}
\mathcal{V}_{i} \cap \mathcal{V}_{j} = \text{span}(\bm{V}_{KMS})
\end{equation}
which comes from the properties of the convection matrices shown in Equation \eqref{eq:intersection}. The reduction subspace $\mathcal{V}_{p}$ is the addition of subspaces $\mathcal{V}_{i}$ as 
\begin{equation}
\mathcal{V}_{p} =  \sum_{i=1}^{n_c} \mathcal{V}_{i} = \text{span}(\bm{V})
\label{eq:subspace_bilinear}
\end{equation}
where $\bm{V}$ is an orthonormal basis of the subspace $\mathcal{V}_{p}$. 

The KMS reduction considers certain modes of the system, $\bm{V}_{\mu}$, in order to form the projection basis. The KMS method selects modes up to a certain frequency. The error estimator ensures that the error to the original system remains below a given tolerance for the frequency range of interest. However, the system matrix changes with the introduction of the convective boundary conditions, as shown in Equation \eqref{eq:system_distributed}. Thus, the eigenvalues and eigenvectors of the system change. It is required to determine if more modes need to be considered in order to ensure that the error remains bounded below a certain $\epsilon$ in the frequency range of interest. 

In order to investigate this, the Weyl's Inequality Theorem is presented as a preliminary result. Theorem \ref{theorem:weyl} bounds the eigenvalues of the sum of two matrices $\bm{M} + \bm{N}$ given the eigenvalues of $\bm{M}$ and $\bm{N}$. 

\begin{theorem}{Weyl's Inequality}
	Let $\bm{M}$, $\bm{N}$ be symmetric matrices of dimension $\mathbb{R}^{n}$ and let $\bm{S} = \bm{M} + \bm{N}$. Let $\alpha_1 \geq \alpha_2 \geq \dots \alpha_n$, $\beta_1 \geq \beta_2 \geq \dots \beta_n$, and $\gamma_1 \geq \gamma_2 \geq \dots \gamma_n$ be the eigenvalues of the matrices $\bm{M}$, $\bm{N}$, and $\bm{S}$ respectively. Then,
	
	\begin{equation}
	\gamma_j  \leq \alpha_i + \beta_{j-i+1} \:\:\:\: \forall i \leq j 
	\end{equation}
	\label{theorem:weyl}
\end{theorem}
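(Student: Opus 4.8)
The plan is to prove Weyl's inequality through the Courant--Fischer min--max characterization of eigenvalues. For a symmetric matrix with eigenvalues ordered decreasingly, its $k$th eigenvalue equals the minimum, over all subspaces $U$ of dimension $n-k+1$, of the maximum of the Rayleigh quotient over $U$. Applied to $\bm{S}$ this reads
\begin{equation*}
\gamma_j = \min_{\substack{U \subseteq \mathbb{R}^n \\ \text{dim}\, U = n-j+1}} \ \max_{\substack{\bm{x} \in U \\ \bm{x} \neq \bm{0}}} \frac{\bm{x}^T \bm{S}\, \bm{x}}{\bm{x}^T \bm{x}},
\end{equation*}
so that \emph{any single} subspace $V$ of dimension $n-j+1$ already furnishes the upper bound $\gamma_j \leq \max_{\bm{x} \in V,\, \bm{x} \neq \bm{0}} (\bm{x}^T \bm{S}\, \bm{x})/(\bm{x}^T \bm{x})$. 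The whole proof therefore reduces to exhibiting one well-chosen $V$ on which the Rayleigh quotient of $\bm{S}$ cannot exceed $\alpha_i + \beta_{j-i+1}$.

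First I would fix orthonormal eigenbases: let $\bm{u}_1,\dots,\bm{u}_n$ be eigenvectors of $\bm{M}$ for $\alpha_1 \geq \dots \geq \alpha_n$, and $\bm{v}_1,\dots,\bm{v}_n$ eigenvectors of $\bm{N}$ for $\beta_1 \geq \dots \geq \beta_n$. Writing $m = j-i+1$ (note $m \geq 1$ precisely because $i \leq j$, and $m \leq n$ because $j \leq n$), I define the two ``tail'' subspaces $U_i = \text{span}\{\bm{u}_i,\dots,\bm{u}_n\}$ and $W_m = \text{span}\{\bm{v}_m,\dots,\bm{v}_n\}$. Expanding a vector in the relevant eigenbasis, and using that each tail retains only the smaller eigenvalues, gives the pointwise Rayleigh bounds $\bm{x}^T \bm{M} \bm{x} \leq \alpha_i\, \bm{x}^T \bm{x}$ for all $\bm{x} \in U_i$ and $\bm{x}^T \bm{N} \bm{x} \leq \beta_m\, \bm{x}^T \bm{x}$ for all $\bm{x} \in W_m$, since $\alpha_i$ and $\beta_m$ are the largest eigenvalues appearing in the respective tails.

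The key step is the dimension count on the intersection. One has $\text{dim}\, U_i = n-i+1$ and $\text{dim}\, W_m = n-m+1 = n-j+i$, whence the standard estimate
\begin{equation*}
\text{dim}(U_i \cap W_m) \geq \text{dim}\, U_i + \text{dim}\, W_m - n = n-j+1 .
\end{equation*}
Thus $U_i \cap W_m$ contains a subspace $V$ of dimension exactly $n-j+1$, the value demanded by the min--max formula for $\gamma_j$. Every $\bm{x} \in V$ lies simultaneously in $U_i$ and in $W_m$, so
\begin{equation*}
\frac{\bm{x}^T \bm{S}\, \bm{x}}{\bm{x}^T \bm{x}} = \frac{\bm{x}^T \bm{M} \bm{x}}{\bm{x}^T \bm{x}} + \frac{\bm{x}^T \bm{N} \bm{x}}{\bm{x}^T \bm{x}} \leq \alpha_i + \beta_{j-i+1},
\end{equation*}
and combining this with the min--max upper bound yields $\gamma_j \leq \alpha_i + \beta_{j-i+1}$.

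I expect the only genuinely delicate point to be this dimension-counting argument, which guarantees that the intersection $U_i \cap W_m$ is large enough to compete in the min--max formula for $\gamma_j$: the choice $m = j-i+1$ is exactly what balances the two tail dimensions so that the intersection lands at $n-j+1$. Everything else --- the Rayleigh bounds on each tail and the additive split $\bm{x}^T \bm{S}\, \bm{x} = \bm{x}^T \bm{M} \bm{x} + \bm{x}^T \bm{N} \bm{x}$ --- is routine once the subspaces are in place, and the hypothesis $i \leq j$ enters only to keep the index $j-i+1$ within the valid range $[1,n]$.
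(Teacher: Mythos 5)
Your proof is correct, but there is nothing in the paper to compare it against: the paper does not prove this statement itself, since Weyl's inequality is a classical result, and its ``proof'' consists entirely of the citation ``See Bhatia, Chapter 3.'' Your argument via the Courant--Fischer min--max characterization is precisely the standard textbook proof (essentially the one in that cited reference): you bound $\gamma_j$ from above by the maximum of the Rayleigh quotient of $\bm{S}$ over a single well-chosen subspace of dimension $n-j+1$, obtained by intersecting the eigenvalue tails $U_i = \mathrm{span}\{\bm{u}_i,\dots,\bm{u}_n\}$ and $W_m = \mathrm{span}\{\bm{v}_m,\dots,\bm{v}_n\}$ with $m = j-i+1$. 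The details all check out: the dimension count $\dim(U_i \cap W_m) \geq (n-i+1) + (n-j+i) - n = n-j+1$ is correct, the hypothesis $i \leq j$ (together with $j \leq n$) keeps the index $m$ in the valid range $[1,n]$, and the pointwise tail bounds $\bm{x}^T\bm{M}\bm{x} \leq \alpha_i\,\bm{x}^T\bm{x}$ on $U_i$ and $\bm{x}^T\bm{N}\bm{x} \leq \beta_m\,\bm{x}^T\bm{x}$ on $W_m$ combine additively exactly as you state. Your version has the merit of being self-contained where the paper defers entirely to the literature.
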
 

\begin{proof}
	See Bhatia \cite{bhatia97} Chapter 3. 
\end{proof}

The result of Theorem \ref{theorem:weyl} can be used to analyze the properties of the eigenvalues of the system of Equation \eqref{eq:system_distributed} with increasing values of the HTC $h_i$.  

\begin{theorem}
	Let the system matrix be $\bm{A}^l_d = \bm{A} +  \sum_{i=1}^{n_c}h^l_i\bm{D}_i$, where $h^l_i\geq0$ is a sample of the $i$th parameters defining the $n_c$ convective boundary conditions. Let $0 \geq \alpha^l_1 \geq \alpha^l_2 \geq \dots \alpha^l_n $ be the eigenvalues of $\bm{A}^l_d$. Let the system matrix be $\bm{A}^m_d = \bm{A} +  \sum_{i=1}^{n_c}h^l_i\bm{D}_i$, where $h^m_i$ is another sample of the $i$ parameters such that $h^m_i \geq h^l_i\geq 0$. Let $0 \geq \alpha^m_1 \geq \alpha^m_2 \geq \dots \alpha^m_n $ be the eigenvalues of $\bm{A}^m_d$. Then, $\alpha^m_j \leq \alpha^l_j$ for $j =1 \dots n$. 
	\label{theorem:modes}
\end{theorem}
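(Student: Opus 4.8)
The plan is to write the higher-HTC system matrix as a negative semi-definite perturbation of the lower-HTC one and then invoke Weyl's inequality (Theorem \ref{theorem:weyl}) with a judicious choice of indices. First I would subtract the two definitions to obtain
\begin{equation*}
\bm{A}^m_d = \bm{A}^l_d + \sum_{i=1}^{n_c}(h^m_i - h^l_i)\bm{D}_i =: \bm{A}^l_d + \bm{N},
\end{equation*}
so that $\bm{A}^m_d$ is a perturbation of $\bm{A}^l_d$ by the matrix $\bm{N} = \sum_{i=1}^{n_c}(h^m_i - h^l_i)\bm{D}_i$. This puts the statement in exactly the form $\bm{S} = \bm{M} + \bm{N}$ required by Theorem \ref{theorem:weyl}, with $\bm{M} = \bm{A}^l_d$ and $\bm{S} = \bm{A}^m_d$, whose eigenvalues are $\alpha^l_j$ and $\alpha^m_j$ respectively.

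The key step is to establish that $\bm{N}$ is symmetric negative semi-definite. Symmetry is immediate: each $\bm{D}_i$ is diagonal, hence symmetric, and so is every linear combination. For the sign, I would use that each convection matrix $\bm{D}_i$ is negative semi-definite; by construction its only nonzero entries are the non-positive diagonal terms associated with the nodes on the boundary $\Gamma_i$, reflecting that convection only removes heat, so that the combined matrix $\bm{A} + \sum_i h_i \bm{D}_i$ stays negative semi-definite as asserted earlier. Since by hypothesis $h^m_i - h^l_i \geq 0$ for every $i$, the matrix $\bm{N}$ is a non-negative combination of negative semi-definite matrices and is therefore itself negative semi-definite. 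Consequently its largest eigenvalue satisfies $\beta_1 \leq 0$.

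Finally I would apply Theorem \ref{theorem:weyl} with the choice $i = j$, which is admissible since it satisfies $i \leq j$ with equality. This yields
\begin{equation*}
\alpha^m_j \leq \alpha^l_j + \beta_{j-j+1} = \alpha^l_j + \beta_1 \leq \alpha^l_j
\end{equation*}
for every $j = 1, \dots, n$, where the last inequality uses $\beta_1 \leq 0$ from the previous step. This is exactly the claim, and it has the clean physical reading that increasing any HTC pushes every eigenvalue further into the left half-plane.

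I expect the only genuinely delicate point to be the justification that each $\bm{D}_i$ is negative semi-definite, i.e.\ pinning down the sign convention inherited from the finite-element discretization of the Robin boundary condition in Equation \eqref{eq:convection_FEM} and its lumped (diagonalized) form; everything else is a direct consequence of Weyl's inequality specialized to $i=j$. A secondary subtlety, should the eigenvalues be understood in the generalized sense $\bm{A}\bm{\phi} = \alpha \bm{E}\bm{\phi}$ rather than as eigenvalues of $\bm{A}^l_d$ alone, is that one should first perform a congruence transformation by a Cholesky factor $\bm{E} = \bm{L}\bm{L}^T$; this preserves both symmetry and the negative semi-definiteness of $\bm{N}$, so the same Weyl argument applies verbatim to $\bm{L}^{-1}\bm{A}^l_d\bm{L}^{-T}$ and $\bm{L}^{-1}\bm{N}\bm{L}^{-T}$.
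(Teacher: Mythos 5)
Your proof is correct and follows essentially the same route as the paper: write $\bm{A}^m_d = \bm{A}^l_d + \sum_{i=1}^{n_c}(h^m_i - h^l_i)\bm{D}_i$, observe that this perturbation is symmetric negative semi-definite since $h^m_i - h^l_i \geq 0$ and each $\bm{D}_i$ is negative semi-definite, and apply Weyl's inequality specialized to $i=j$ to get $\alpha^m_j \leq \alpha^l_j + \beta_1 \leq \alpha^l_j$. Your extra care---explicitly checking symmetry and semi-definiteness of the perturbation, and noting the Cholesky congruence needed if the eigenvalues are understood in the generalized sense with respect to $\bm{E}$---addresses details the paper's proof leaves implicit, but does not change the argument.
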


\begin{proof}
	The system matrices $\bm{A}$ and $\bm{D}_i$ are negative semi-definite, i.e.\ all the eigenvalues are real and non-positive. The result of Theorem \ref{theorem:weyl} can be particularized for $i=j$ as
	\begin{equation}
	\gamma_j  \leq \alpha_j + \beta_{1} 
	\end{equation} 
	
	Additionally, considering a negative semi-definite matrix $\bm{N}$ it can be stated that 
	\begin{equation}
	\gamma_j  \leq \alpha_j + \beta_{1} \leq \alpha_j
	\label{eq:negativeweyl}
	\end{equation} 
	as $\beta_{1} \leq 0$. Let $\delta h_i$ be the difference between $h^m_i$ and $h^l_i$ for $i = 1 \dots n_c$, i.e.\ $\Delta h_i = h^m_i - h^l_i$. From the statement of this theorem, $\Delta h_i \geq 0$. The system matrices can be expressed as $\bm{A}^m_d = \bm{A}^l_d + \sum_{i=1}^{n_c}\Delta h_i\bm{D}_i$. Applying the particularization of the Theorem \ref{theorem:weyl} to negative semi-definite matrices of Equation \eqref{eq:negativeweyl}, the following result is obtained
	\begin{equation}
	\alpha^m_j \leq \alpha^l_j 
	\end{equation}
	
\end{proof}

The result of Theorem \ref{theorem:modes} states that the higher the HTC, the more negative the eigenvalues of the system are. The physical interpretation of this theorem is that the higher the convective heat exchange with the surrounding, the faster the time constants of the system are.  On the other side, a system with low HTC evacuates less efficiently the heat and thus has slower time constants.

Theorem \ref{theorem:modes} can be particularized for the case that the initial HTC are equal to zero, i.e.\ $h^l_i = 0$ for all $i = 1 \dots n_c$. This is the case of the first iteration of the bilinearization, as shown in Equation \eqref{eq:bilinear}. The eigenvalues of the system without convective boundary conditions are less negative. Thus, the eigenfrequcies of the selected eigenmodes in $\bm{V}_{\mu}$ increase in magnitude after including the convection matrices. Therefore, the reduced system with proposed bilinearization remains valid in the frequency range of interest.  

The proposed parametric MOR approach extends the KMS projection basis to include the parametric dependency of the HTC. Therefore, the dimension of the reduced system increases with respect to the non-parametric reduced model. The dimension of the parametric model, $r_{p}$, depends on the dimension of the number of convective boundary conditions, $n_c$, the number of iteration, $n_{me}$, and the dimension of the non-parametric reduced system, $r$,  as
\begin{equation}
r_{p} = r(1+n_{me}n_c)
\label{eq:dimension_bilinear}
\end{equation}

\section{Thermo-mechanical coupling}

The KMS reduction constructs a projection basis that reduces the thermal system of Equation \eqref{eq:system_distributed}. The thermal model determines the transient behavior of the system. However, the main output of interest of the thermo-mechanical model is the mechanical deformation of the structure. Therefore, a reduction basis for the system of Equation \eqref{eq:coupling_state} and \eqref{eq:LTI_output_mech} needs to be defined. 

The mechanical model only considers the quasi-static response of the structure. Therefore, the reduced subspace for the mechanical system can be constructed using a Krylov subspace with one moment and one expansion point at a low frequency. The reduced system matches the static gain of the mechanical system. In order to construct the projection basis, the number of inputs of Equation \eqref{eq:coupling_state} needs to be considered. The number of mechanical inputs is equal to the number of possible independent temperature distributions and the number of external loads. In principle, the number of independent temperatures is $n$, i.e.\ the dimension of the original thermal system. However, the reduced thermal model determines the temperature distribution. Thus, the possible temperature distributions are limited to a linear combination of the vectors of the reduction basis $\bm{V}$. All the possible linearly independent thermal body forces $\bm{F}_{th}$ are
\begin{equation}
	\bm{F}_{th} = \bm{K}_{th}\bm{V}
	\label{eq:coupling_V}
\end{equation}

Therefore, the state space representation of Equation \eqref{eq:coupling_state} can be expressed in terms of the reduced temperature state $\bm{\tilde{x}}$ instead of the full state $\bm{x}$ as
\begin{equation}
	\bm{A}_{mech}\bm{x}_{mech} = \begin{bmatrix}
	\bm{K}_{th}\bm{V} & \bm{F}_{ext}
	\end{bmatrix}\begin{bmatrix}
	\bm{\tilde{x}} \\ \bm{u}_{ext}
	\end{bmatrix}
	\label{eq:sys_mech}
\end{equation} 
where the number of mechanical inputs is the order of the thermal reduced system and the external forces. The reduction basis $\bm{V_{mech}}$ for the mechanical system can be calculated using the Krylov subspace method as
\begin{equation}
	\text{span}(\bm{V}_{mech}) = \mathcal{K}_r \{(s_e\bm{I}-\bm{A}_{mech})^{-1}, (s_e\bm{I}-\bm{A}_{mech})^{-1}\begin{bmatrix}
	\bm{K}_{th}\bm{V} & \bm{F}_{ext}
	\end{bmatrix}  \}
	\label{eq:Krylov_mech}
\end{equation} 
where the expansion point $s_e$ is chosen close to zero. The Krylov subspace matches exactly static response of the system of Equation \eqref{eq:sys_mech} for the different temperature distributions considered in the thermal system projected by $\bm{V}$. 

\section{Numerical results}
\label{sec:results}

This section illustrates the MOR methods developed in this paper with numerical examples. Figure \ref{fig:RTM} shows the geometry and FE-discretization of a 3-axis milling machine. The thermo-mechanical model represents the thermal response of the machine tool under internal heat sources, such as the heat dissipated in the linear drives, and external effects, such as fluctuations of the environmental temperature.

\begin{figure}[!h]	 	
	\includegraphics[width=0.35\textwidth]{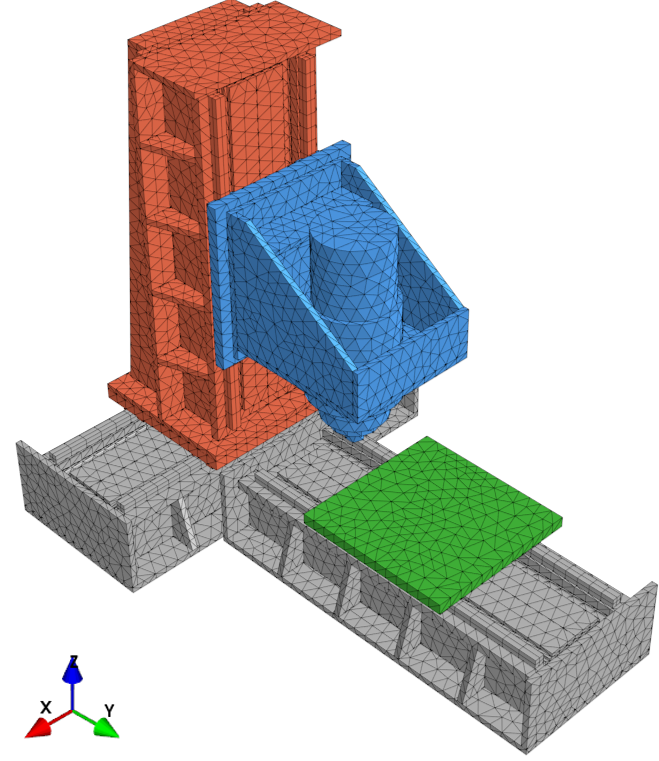}
	\centering
	\caption{Three axis milling machine}
	\label{fig:RTM}
\end{figure}

Figure \ref{fig:boundary_table} shows the geometry and FE-mesh of the table of the machine tool, which can move in Y-direction. The simple geometry under consideration leads to an original system with a small number of dofs, i.e. 4157 thermal dofs. The small size of the original system facilitates the evaluation of its thermal response in order to compare it with the reduced system. The convective boundary conditions of the machine tool table are separated into two regions, as shown in Figure \ref{fig:boundary_table}. Each of the convective boundary conditions has an independent value of the HTC, namely $\text{HTC}_{\text{top}}$ and $\text{HTC}_{\text{bot}}$. Additionally, the bottom of the table is exposed to a heat flux, representing the heat losses of a linear drive.

\begin{figure}
	\centering
	\subfigure[The top of the table]{\label{fig:a}\includegraphics[width=0.25\textwidth]{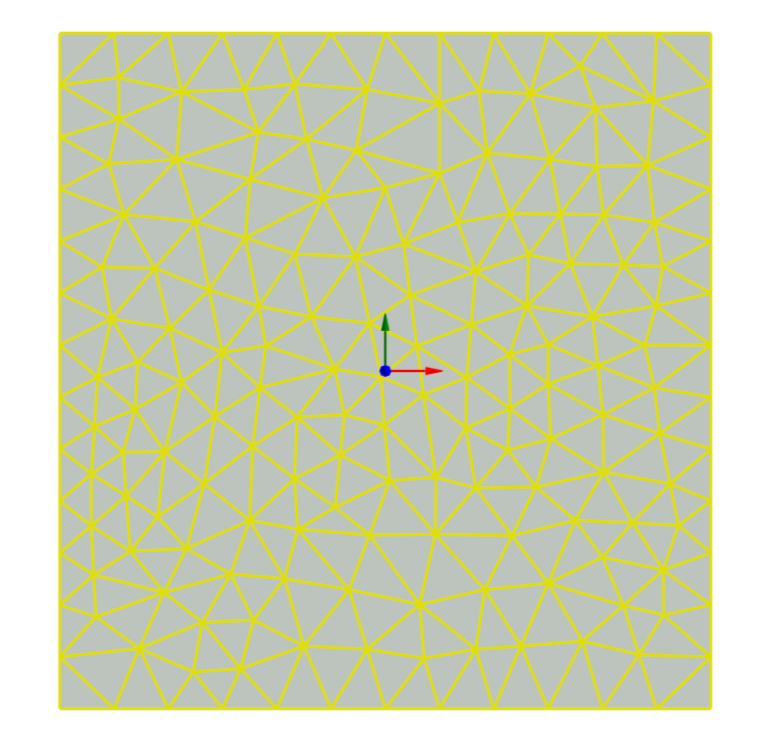}}
	\subfigure[Bottom of the table]{\label{fig:b}\includegraphics[width=0.25\textwidth]{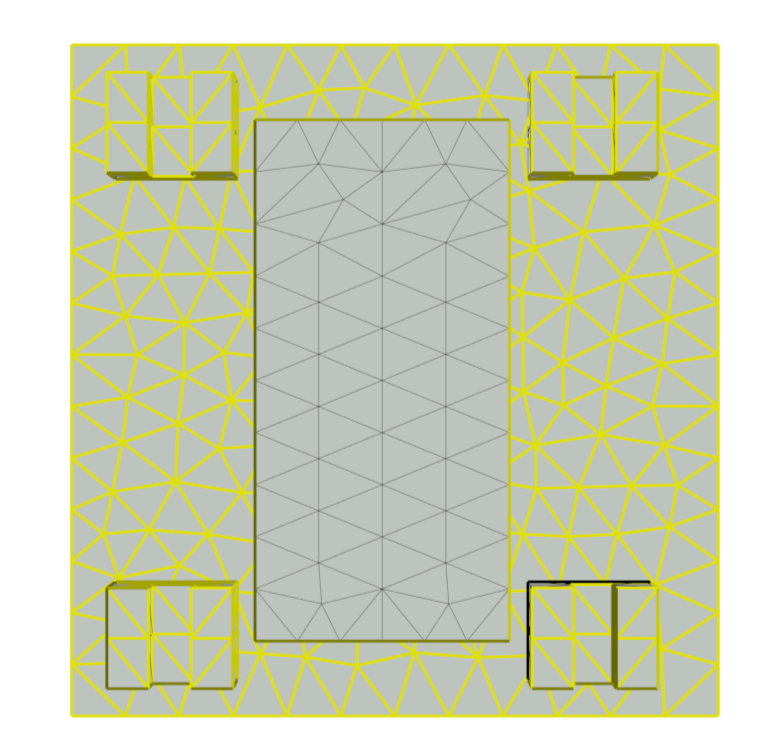}}
	\caption{Convective boundary conditions of the thermal model of the table, i.e.\ Y-axis, of the machine tool of Figure \ref{fig:RTM}}
	\label{fig:boundary_table}
\end{figure}

The a priori error estimator of Equation \eqref{eq:error_KMS} facilitates the selection of the parameters for the KMS reductions. Given a maximum error between the reduced and original system, $\epsilon$, and a frequency range of interest, $\omega \in [0, \omega_{max}]$, the error estimator provides the value of $\omega_m$. The reduction basis includes all eigenvectors with eigenfrequencies below the frequency $\omega_m$, i.e.\ $\omega_{\mu} < \omega_m$. For the FE thermal model under consideration, the following parameters are chosen: 

\begin{itemize}
	\item $\epsilon = $  0.05
	\item $\omega_{max} = $ 0.01 rad/s
\end{itemize}

The maximum frequency of interest, $\omega_{max}$, determines the frequency range containing the most relevant part of the response to the considered thermal inputs. For the thermal model of Figure \ref{fig:boundary_table}, the value of $\omega_m $ is 0.04367 rad/s. This results in including 97 modes in the KMS projection basis $\bm{V}_{KMS}$. The KMS reduction requires additionally an expansion point at a low frequency in order to match the steady state response of the system. For this thermal model, the reductions basis $\bm{V}_{KMS}$ considers an expansion point $s_e = 10^{-8}$ rad/s. The bilinear reduction extends the basis $\bm{V}_{KMS}$ with the information about the convective boundary conditions. For the investigated thermal model, the number of parametric convective boundary conditions, $n_c$, is 2. The number of iterations for the bilinear reduction ($m_d$) is set to 2. The resulting reduced model has 496 dof, which is considerably smaller than the dimension of the original system.  

In order to evaluate the performance of the reduction method, the transfer functions of the thermal response of the original and reduced systems are compared. The input of the transfer function is the fluctuation of the environmental temperature. The output of the transfer function is the temperature measured at the drive of the Y-axis. The transfer functions are evaluated for different values of the parameters describing the convective boundary conditions. The error between the reduced and the original system is calculated between $10^{-5}$ and 1 rad/s according to Equation \eqref{eq:error_definition_ij} for the considered system input and output. Figure \ref{fig:error_bilinear_FRF} depicts the relative error between original and the system reduced by means of the parametric KMS method. The error is negligibly small at low frequencies, as the chosen expansion point matches the steady state response. At high frequencies, the error increases reaching values close to 1. The reduced system succeeds in reproducing the thermal response of the system in the frequency range of interest, i.e.\  up to 0.01 rad/s. Additionally, Figure \ref{fig:error_bilinear_FRF} shows that the error estimator of Equation \eqref{eq:error_KMS} is an upper bound of the relative error for all frequencies.

\begin{figure}[!h]
	\centering
	\def \svgwidth{\textwidth} 
	\import{figures/}{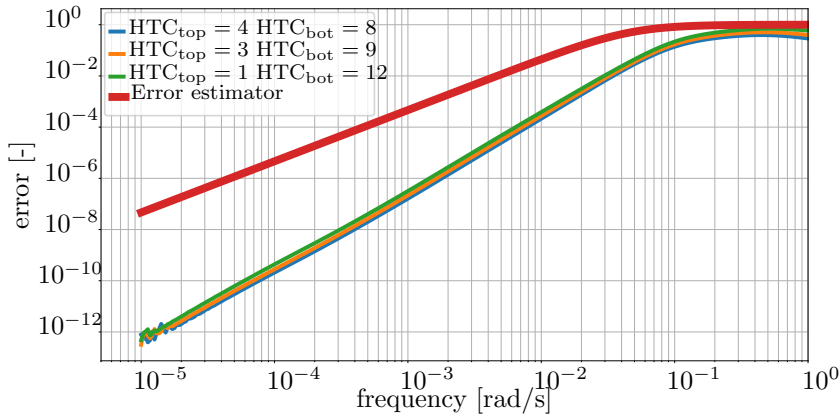} 
	\caption{FRF of the relative error between reduced and original system for different combinations of values of $\text{HTC}_{\text{top}}$ and $\text{HTC}_{\text{bot}}$ in $\frac{\text{W}}{\text{m}^2\text{K}}$. The input is the environmental temperature and the output the mean temperature of the drive. Comparison with the error estimator of Equation \eqref{eq:error_KMS}}
	\label{fig:error_bilinear_FRF}
\end{figure}

Figure \ref{fig:error_bilinear_FRF} shows the reduction errors for one single combination of the input, environmental temperature, and output, mean temperature at the drive. Figure \ref{fig:error_bilinear_FRF_inputs} extends the evaluation of the relative error of the reduced and original system to other input and output combinations, fixing the values of $\text{HTC}_{\text{top}}$ and $\text{HTC}_{\text{bot}}$ to 4 and 8 $\frac{\text{W}}{\text{m}^2\text{K}}$ respectively. For the considered numerical example, the relative errors for different inputs and output combinations show more variability than the errors for Figure \ref{fig:error_bilinear_FRF}. The proposed error estimator remains an upper bound of the relative error for the transfer functions of the reduction error.    

\begin{figure}[!h]
	\centering
	\def \svgwidth{\textwidth} 
	\import{figures/}{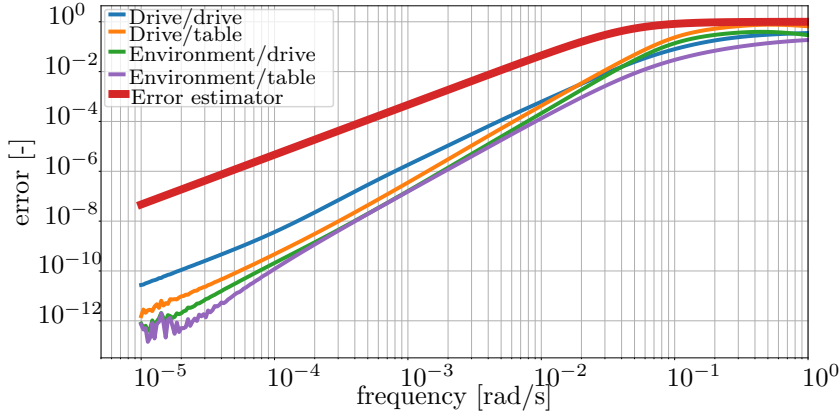} 
	\caption{FRF of the relative error between reduced and original system for different combinations of inputs (heat flow at the drives and environment) and outputs (mean temperature at drive and table). The values of $\text{HTC}_{\text{top}}$ and $\text{HTC}_{\text{bot}}$ are 4 and 8 $\frac{\text{W}}{\text{m}^2\text{K}}$ respectively. Comparison with the error estimator of Equation \eqref{eq:error_KMS}}
	\label{fig:error_bilinear_FRF_inputs}
\end{figure}

In order to study the thermal response of the reduced system, it is interesting to compare the eigenvalues of the reduced and the original systems. Let $\omega_i$ be the $i$th eigenvalue of the system of Equation \eqref{eq:system_distributed} and let $\tilde{\omega}_i$ be the $i$th eigenvalue of the system projected to the subspace spanned by the projection basis $\bm{V}$ of Equation \eqref{eq:subspace_bilinear}. The relative error for the $i$th eigenvalue can be defined as 

\begin{equation}
	e_i = \left\vert\frac{\tilde{\omega}_i-\omega_i }{\omega_i}\right \vert
\end{equation}

Figure \ref{fig:error_bilinear_modes} shows the relative error of the first 90 eigenvalues between the original and reduced system. The eigenvalues of the systems lie in the frequency range of interest, ranging from $7 \cdot 10^{-5}$ to $0.042$ rad/s. The relative errors are evaluated for different combinations of the HTC, considering the same values of the HTC parameters as in Figure \ref{fig:error_bilinear_FRF}. The relative error of the eigenvalues increases at higher mode numbers, as illustrated in Figure \ref{fig:error_bilinear_modes}. Theoretically, if the values of the HTC are all zero, i.e.\ if no convective boundary conditions are applied, the KMS reduction basis matches exactly the eigenvalues of the original system. However, including values of the HTC different than zero results in a difference between the reduced and the original eigenvalues due to the bilinearization process. For the thermal FE model under consideration, the maximum relative error between the eigenvalues of the reduced and original system remain below $2 \cdot 10 ^{-5}$. Therefore, the reduced system with parametric convective boundary conditions succeeds in approximating the eigenvalues of the original system for different combinations of values of the HTC.

\begin{figure}[!h]	 
	\centering
	\def \svgwidth{\textwidth} 
	\import{figures/}{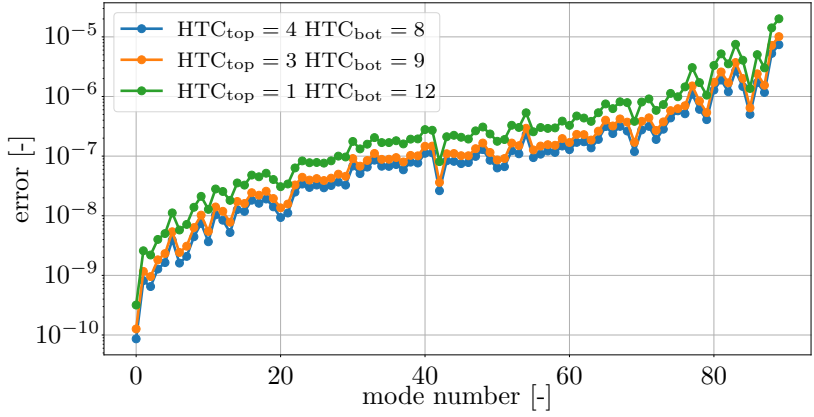} 	
	\caption{Relative error between the first 90 eigenfrequencies of the reduced and original system for different combinations of values of $\text{HTC}_{\text{top}}$ and $\text{HTC}_{\text{bot}}$ in $\frac{\text{W}}{\text{m}^2\text{K}}$}
	\label{fig:error_bilinear_modes}
\end{figure}

The parametric KMS method creates a reduced thermal model, which reproduces the dynamics of the thermal behavior. However, the thermally induced structural deformation is the output of interest of the thermo-mechanical models of machine tools. In order to evaluate the mechanical response of the model of Figure \ref{fig:boundary_table}, the mechanical boundary conditions are defined as follows

\begin{itemize}
	\item The structure is fixed at the location of the guide carriages 
	\item The stiffness value is $10^{10}$ $\frac{\text{N}}{\text{m}}$ in all directions   
\end{itemize}

Equation \eqref{eq:Krylov_mech} defines the projection basis for the thermo-mechanical system. The expansion point $s_0$ for the Krylov subspace of the mechanical system is 30 rad/s. Starting from an original mechanical system with 12471 dofs, the coupling to thermal parametric reduced system results in a thermo-mechanical reduced system of dimension 532.   

In order to evaluate the accuracy of the reduction, the FRF of the thermo-mechanical the original and reduced system are compared. The thermal input is the fluctuation of the environmental temperature, which corresponds to the thermal input shown in Figure \ref{fig:error_bilinear_FRF}. The mechanical outputs are the linear displacements in X-, Y-, and Z-direction measured at the center of the machine tool table, where the workpiece is placed. The relative error between the two systems are calculated in the frequency range between $10^{-5}$ and 1 rad/s. Figure \ref{fig:mech_error_HTC} shows the FRFs of the relative error between the original and reduced systems for different values of the HTC, considering the displacement in Y-direction as the output of the FRF. Figure \ref{fig:mech_error_HTC} illustrates that the resulting relative errors between reduced and original system remain below 0.01 for up to the maximum frequency of interest, 0.01 rad/s. Thus, the thermo-mechanical coupled reduced system reproduces accurately the response of the original system for different values of the HTC. 

\begin{figure}
	\centering
	\def \svgwidth{\textwidth} 
	\import{figures/}{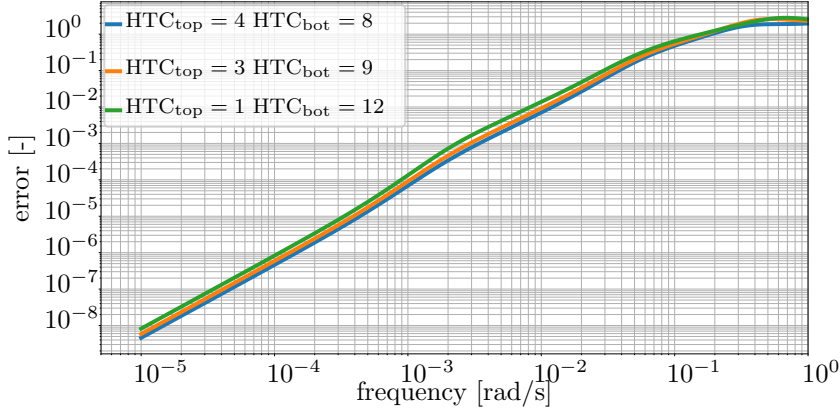}	 	
	\caption{Relative error between the thermo-mechanical reduced and the original system in Y-direction for different combinations of values of $\text{HTC}_{\text{top}}$ and $\text{HTC}_{\text{bot}}$ in $\frac{\text{W}}{\text{m}^2\text{K}}$ }
	\label{fig:mech_error_HTC}
\end{figure}

Figure \ref{fig:mech_error} extends the evaluation to different outputs, namely the deviations in X- , Y-, and Z-direction.  The mechanical response of the system is evaluated for a value of the parameters $\text{HTC}_{\text{top}}$ and $\text{HTC}_{\text{bot}}$ of 4 and 8 $\frac{\text{W}}{\text{m}^2\text{K}}$ respectively. Figure \ref{fig:mech_error} illustrates that the  relative error of the mechanical response remains below 0.01 for the frequency range of interest. Therefore, this numerical example illustrates that the reduced thermo-mechanical system captures the response of the original system.

\begin{figure}
	\centering
	\def \svgwidth{\textwidth} 
	\import{figures/}{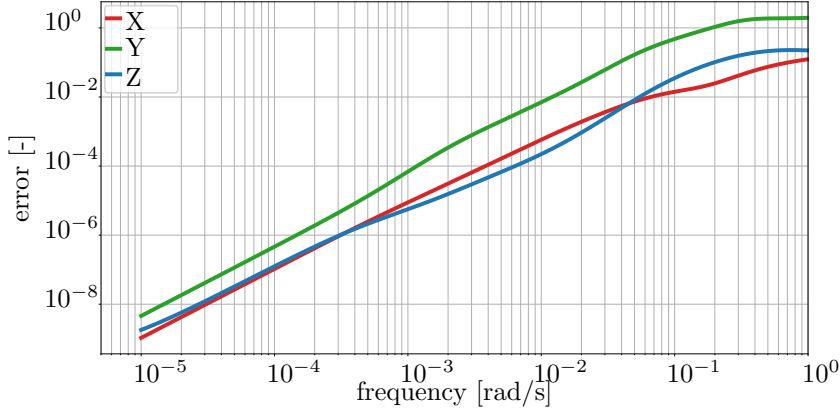}	 	
	\caption{Relative error between the thermo-mechanical reduced and the original system in X-, Y-n and Z-direction. The values of $\text{HTC}_{\text{top}}$ and $\text{HTC}_{\text{bot}}$ are 4 and 8 $\frac{\text{W}}{\text{m}^2\text{K}}$ respectively.}
	\label{fig:mech_error}
\end{figure}

\section{Conclusions}

This work presents a MOR approach to deal with the parametric dependency of the convective boundary conditions in weakly coupled thermo-mechanical models. The KMS method proposes a reduction basis combining the information of the Krylov subspace basis with an expansion point at a low frequency and the thermal eigenmodes. The reduced system reproduces the steady state response of the system as well as the transient behavior up a to a certain frequency of interest. This method is especially suited for thermal models of machine tools and other similar mechatronic systems, which show a decaying amplitude of the response at higher frequencies. This work proposes an upper bound of the magnitude of the reduction error. The error bound provides an a-priori estimation of the KMS reduction error and determines the maximum eigenfrequecy to be included in the reduction basis so that the error remains below a certain value for the frequency range of interest.

Among the different parameters describing thermal models of machine tools, this work focuses on the HTC, which defines the convective heat exchange between the structure and the surrounding fluid media. This paper proposes a parametric MOR method to enable the modification of the value of the HTC after reduction. The KMS reduction basis is extended using the concept of system bilinearization. The numerical results show that the parametrically reduced model reproduces the thermal response of the original system for different values of the HTC. Additionally, the parametric reduction retains the thermal eigenfrequencies of the original model.  

The main objective of thermal error models of machine tools is the prediction of the thermally induced deviations. Therefore, the model needs to evaluate structural deformations associated to inhomogeneous, time-varying temperature distributions. This work presents a method to couple a reduced thermal and mechanical system. The developed coupling method creates a dedicated reduced mechanical system. On one hand, the mechanical system describes the response to any mechanical input, e.g.\ preloads or gravity, for any combination of the position of the axes. On the other hand, the mechanical system provides the response to any temperature distribution computed by the reduced system. The numerical results show that the reduced system succeeds in capturing the mechanical response of the original system for several values of the HTC and different outputs. The relative error of the reduction remains below 0.01 for the reduced thermo-mechanical system in the frequency range of interest. 

The developed MOR methods are implemented in a simulation software MORe \cite{MORe}, which is designed to facilitate the development of physical models of machine tools. Future work will investigate the theoretical error bounds of the KMS method for arbitrary expansion points as well as generalize the developed error estimator to different input and output combinations. Future research will develop thermo-mechanical models of the whole machine tool assembly with parametric boundary conditions. These models will evaluate the thermo-mechanical response of machine tools to varying conditions in the surrounding environment.  

\section{Acknowledgments}

The authors would like to thank the Swiss Innovation Agency (Innosuisse) and the Swiss National Science Foundation (SNSF) for their financial support.

\bibliography{mybibfile}

\begin{thebibliography}{10}
\expandafter\ifx\csname url\endcsname\relax
  \def\url#1{\texttt{#1}}\fi
\expandafter\ifx\csname urlprefix\endcsname\relax\def\urlprefix{URL }\fi
\expandafter\ifx\csname href\endcsname\relax
  \def\href#1#2{#2} \def\path#1{#1}\fi

\bibitem{2012_Mayr_thermal}
J.~Mayr, J.~Jedrzejewski, E.~Uhlmann, M.~A. Donmez, W.~Knapp, F.~H\"artig,
  K.~Wendt, T.~Moriwaki, P.~Shore, R.~Schmitt, C.~Brecher, T.~W\"urz,
  K.~Wegener, Thermal issues in machine tools, CIRP Annals - Manufacturing
  Technology 61~(2) (2012) 771 -- 791.
\newblock \href {https://doi.org/https://doi.org/10.1016/j.cirp.2012.05.008}
  {\path{doi:https://doi.org/10.1016/j.cirp.2012.05.008}}.

\bibitem{Bry1990}
J.~Bryan, International status of thermal error research (1990), CIRP Annals -
  Manufacturing Technology 39~(2) (1990) 645 -- 656.
\newblock \href {https://doi.org/https://doi.org/10.1016/S0007-8506(07)63001-7}
  {\path{doi:https://doi.org/10.1016/S0007-8506(07)63001-7}}.

\bibitem{ALTINTAS2005115}
Y.~Altintas, C.~Brecher, M.~Weck, S.~Witt, Virtual machine tool, CIRP Annals
  54~(2) (2005) 115 -- 138.
\newblock \href {https://doi.org/https://doi.org/10.1016/S0007-8506(07)60022-5}
  {\path{doi:https://doi.org/10.1016/S0007-8506(07)60022-5}}.

\bibitem{Benner2015}
P.~Benner, S.~Gugercin, K.~Willcox, A survey of projection-based model
  reduction methods for parametric dynamical systems, SIAM review 57~(4) (2015)
  483--531.
\newblock \href {https://doi.org/https://doi.org/10.1137/130932715}
  {\path{doi:https://doi.org/10.1137/130932715}}.

\bibitem{Herzog2018}
R.~Herzog, I.~Riedel, D.~Uci{\'{n}}ski, Optimal sensor placement for joint
  parameter and state estimation problems in large-scale dynamical systems with
  applications to thermo-mechanics, Optimization and Engineering 19~(3) (2018)
  591--627.
\newblock \href {https://doi.org/https://doi.org/10.1007/s11081-018-9391-8}
  {\path{doi:https://doi.org/10.1007/s11081-018-9391-8}}.

\bibitem{Lang2014}
N.~Lang, J.~Saak, P.~Benner, Model order reduction for systems with moving
  loads, at-Automatisierungstechnik 62~(7) (2014) 512--522.
\newblock \href {https://doi.org/https://doi.org/10.1515/auto-2014-1095}
  {\path{doi:https://doi.org/10.1515/auto-2014-1095}}.

\bibitem{Mian2013}
N.~S. Mian, S.~Fletcher, A.~Longstaff, A.~Myers, Efficient estimation by {FEA}
  of machine tool distortion due to environmental temperature perturbations,
  Precision Engineering 37~(2) (2013) 372 -- 379.
\newblock \href
  {https://doi.org/https://doi.org/10.1016/j.precisioneng.2012.10.006}
  {\path{doi:https://doi.org/10.1016/j.precisioneng.2012.10.006}}.

\bibitem{Weng2018}
L.~Weng, W.~Gao, Z.~Lv, D.~Zhang, T.~Liu, Y.~Wang, X.~Qi, Y.~Tian, Influence of
  external heat sources on volumetric thermal errors of precision machine
  tools, The International Journal of Advanced Manufacturing Technology 99~(1)
  (2018) 475--495.
\newblock \href {https://doi.org/10.1007/s00170-018-2462-3}
  {\path{doi:10.1007/s00170-018-2462-3}}.

\bibitem{Shi2018}
X.~Shi, K.~Zhu, W.~Wang, L.~Fan, J.~Gao, A thermal characteristic analytic
  model considering cutting fluid thermal effect for gear grinding machine
  under load, The International Journal of Advanced Manufacturing Technology
  99~(5) (2018) 1755--1769.
\newblock \href {https://doi.org/10.1007/s00170-018-2562-0}
  {\path{doi:10.1007/s00170-018-2562-0}}.

\bibitem{Panzer2010}
H.~Panzer, J.~Mohring, R.~Eid, B.~Lohmann, Parametric model order reduction by
  matrix interpolation, Automatisierungstechnik 58 (2010) 475--484.
\newblock \href {https://doi.org/10.1524/auto.2010.0863}
  {\path{doi:10.1524/auto.2010.0863}}.

\bibitem{Lee2017}
J.~Lee, M.~Cho, An interpolation-based parametric reduced order model combined
  with component mode synthesis, Computer Methods in Applied Mechanics and
  Engineering 319 (2017) 258 -- 286.
\newblock \href {https://doi.org/https://doi.org/10.1016/j.cma.2017.02.010}
  {\path{doi:https://doi.org/10.1016/j.cma.2017.02.010}}.

\bibitem{Amsallem2008}
D.~Amsallem, C.~Farhat, Interpolation method for adapting reduced-order models
  and application to aeroelasticity, Aiaa Journal - AIAA J 46 (2008)
  1803--1813.
\newblock \href {https://doi.org/10.2514/1.35374} {\path{doi:10.2514/1.35374}}.

\bibitem{BaurBenner2011}
U.~Baur, P.~Benner, A.~Greiner, J.~Korvink, J.~Lienemann, C.~Moosmann,
  Parameter preserving model order reduction for mems applications,
  Mathematical and Computer Modelling of Dynamical Systems 17~(4) (2011)
  297--317.
\newblock \href {https://doi.org/https://doi.org/10.1080/13873954.2011.547658}
  {\path{doi:https://doi.org/10.1080/13873954.2011.547658}}.

\bibitem{Phillips2003}
J.~R. {Phillips}, Projection-based approaches for model reduction of weakly
  nonlinear, time-varying systems, IEEE Transactions on Computer-Aided Design
  of Integrated Circuits and Systems 22~(2) (2003) 171--187.
\newblock \href {https://doi.org/10.1109/TCAD.2002.806605}
  {\path{doi:10.1109/TCAD.2002.806605}}.

\bibitem{BAI2006406}
Z.~Bai, D.~Skoogh, A projection method for model reduction of bilinear
  dynamical systems, Linear Algebra and its Applications 415~(2) (2006) 406 --
  425, special Issue on Order Reduction of Large-Scale Systems.
\newblock \href {https://doi.org/https://doi.org/10.1016/j.laa.2005.04.032}
  {\path{doi:https://doi.org/10.1016/j.laa.2005.04.032}}.

\bibitem{BREITEN2010443}
T.~Breiten, T.~Damm, Krylov subspace methods for model order reduction of
  bilinear control systems, Systems \& Control Letters 59~(8) (2010) 443 --
  450.
\newblock \href
  {https://doi.org/https://doi.org/10.1016/j.sysconle.2010.06.003}
  {\path{doi:https://doi.org/10.1016/j.sysconle.2010.06.003}}.

\bibitem{Benner2012}
P.~Benner, T.~Breiten, Interpolation-based \${\cal h}\_2\$-model reduction of
  bilinear control systems, SIAM Journal on Matrix Analysis and Applications
  33~(3) (2012) 859--885.
\newblock \href {http://arxiv.org/abs/https://doi.org/10.1137/110836742}
  {\path{arXiv:https://doi.org/10.1137/110836742}}, \href
  {https://doi.org/10.1137/110836742} {\path{doi:10.1137/110836742}}.

\bibitem{Bruns2015}
A.~Bruns, P.~Benner, Parametric model order reduction of thermal models using
  the bilinear interpolatory rational krylov algorithm, Mathematical and
  Computer Modelling of Dynamical Systems 21~(2) (2015) 103--129.
\newblock \href {https://doi.org/10.1080/13873954.2014.924534}
  {\path{doi:10.1080/13873954.2014.924534}}.

\bibitem{Bathe2006}
K.~Bathe, Finite element procedures in engineering analysis, Prentice Hall,
  1982.
\newblock \href {https://doi.org/https://doi.org/10.1002/nag.1610070412}
  {\path{doi:https://doi.org/10.1002/nag.1610070412}}.

\bibitem{Antoulas2005}
A.~C. Antoulas, Approximation of large-scale dynamical systems, Society for
  Industrial and Applied Mathematics, 2005.
\newblock \href {https://doi.org/https://doi.org/10.1137/1.9780898718713}
  {\path{doi:https://doi.org/10.1137/1.9780898718713}}.

\bibitem{Spescha_Diss}
D.~Spescha, Framework for efficient and accurate simulation of the dynamics of
  machine tools, Ph.D. thesis, TU Clausthal (Nov 2018).
\newblock \href {https://doi.org/10.21268/20181119-132644}
  {\path{doi:10.21268/20181119-132644}}.

\bibitem{BRUNS20072859}
T.~Bruns, Topology optimization of convection-dominated, steady-state heat
  transfer problems, International Journal of Heat and Mass Transfer 50~(15)
  (2007) 2859 -- 2873.
\newblock \href
  {https://doi.org/https://doi.org/10.1016/j.ijheatmasstransfer.2007.01.039}
  {\path{doi:https://doi.org/10.1016/j.ijheatmasstransfer.2007.01.039}}.

\bibitem{bhatia97}
R.~Bhatia,
  \href{https://link.springer.com/book/10.1007/978-1-4612-0653-8}{Matrix
  Analysis}, Vol. 169, Springer, 1997.
\newline\urlprefix\url{https://link.springer.com/book/10.1007/978-1-4612-0653-8}

\bibitem{MORe}
inspire AG, \href{https://www.more-simulations.ch/}{{MORe}} (2020).
\newline\urlprefix\url{https://www.more-simulations.ch/}

\end{thebibliography}

\end{document}